\pgfplotsset{compat=1.15}
\theoremstyle{plain}
\newtheorem{Theorem}{Theorem}[section]
\newtheorem{Lemma}[Theorem]{Lemma}
\newtheorem{Observation}[Theorem]{Observation}
\newtheorem{Corollary}[Theorem]{Corollary}
\theoremstyle{definition}
 \DeclareMathOperator{\dee}{d}
\DeclareMathOperator{\er}{\mathbb{R}}
 \DeclareMathOperator*{\di}{dist}
\author{Tomáš Roskovec}
\author{Filip Soudský}
\address{T.~G.~Roskovec: Faculty of Economics, University of South Bohemia, Studentsk\' a 13, \v Cesk\' e Bud\v ejovice, Czech Republic; 
}
\email{troskovec@jcu.cz}
\address{F.~Soudsk\' y: Faculty of Science, Humanities and Education, Technical University of Liberec, Studentská 1402/2, Liberec, Czech Republic; 
}
\email{filip.soudsky@tul.cz}
\title{An elementary proof of Acerbi Fusco minimizer existence theorem}
\begin{document}
\maketitle
\begin{abstract}
    The weak lower semicontinuity of the functional 
    $$
    F(u)=\int_{\Omega}f(x,u,\nabla u)\dee x
    $$
    is a classical topic that was studied thoroughly. It was shown that if the function $f$ is continuous and convex in the last variable, the functional is sequentially weakly lower semicontinuous on $W^{1,p}(\Omega)$. However, the known proofs use advanced instruments of real and functional analysis. Our aim here is to present proof that can be easily understood by students familiar only with the elementary measure theory. 
\end{abstract}

\section{Introduction and the main result}
Many optimization problems in physics may be formulated in terms of minimization of the functional in the form of
\begin{equation}\label{eq:Fu}\tag{M}
F(v)=\int_\Omega f(x,v,\nabla v)\dee x,    
\end{equation}
over some set $\mathcal{W}$ of admissible functions. This problem is a very classical one and its roots can be traced to the 17th century. The systematic study was launched later by Lagrange, du Boys-Reymond, and others (see for instance \cite{Lagr, duB}). They formulated the Euler-Lagrange equations as necessary conditions to hold for the minimizer. However, the existence of the minimizer has to be proved first (otherwise even solving Euler-Lagrange equations does not guarantee to find the solution). As many examples show, the existence of the minimizer cannot be expected in general. Therefore we have to add some additional assumptions on the function $f$ in \eqref{eq:Fu} to assure the existence of a minimizer. The existence problem, however, is typically approached with advanced methods including the knowledge of certain parts of functional analysis, measure theory, and function spaces. We present an alternative proof of the existence of a minimizer under the convexity condition of $f$ with a given boundary data. Unlike the classical ones, our proof is using only elementary methods of measure theory and real analysis and its direction is easy to follow. Our calculation is done in detail and we intentionally avoid shortening it with advanced tools.

Now we specify the problem. Let us consider a domain $\Omega\subset \er^n$ with a Lipschitz boundary and let $f\in \mathcal{C}(\overline{\Omega}\times \er\times\er^n)$. Note that the condition of continuity of $f$ can be relaxed and one may consider the Caratheodory function instead (see \cite{DAC}), however, this is done easily using the Scorza-Dragoni theorem \ref{thmSD}, thus we restrict ourselves to continuous functions. Let $u_0\in W^{1,p}(\Omega,\er^d)$, we denote the set of all functions in given Sobolev space with the same boundary data as $u_0$ by
\begin{equation}\label{Mnoz}
 \mathcal{W}:=u_0+W^{1,p}_0(\Omega,\er^d).   
\end{equation}
Let $p\in(1,\infty)$, by \textit{p-growth condition} of $f$ we mean, if there exists $1\leq q< p$ such that
\begin{equation}\label{PG}\tag{p-G}
\begin{aligned}
  (\exists c_0>0)(\exists c_1\in\er,c_2\in L^1(\Omega))(\forall (x,u,\xi)\in\overline{\Omega}\times\er^d\times\er^z)\\
f(x,u,\xi)\geq c_0|\xi|^p+c_1|u|^q+c_2(x).   
\end{aligned}
\end{equation}
The proof can be slightly simplified by considering $c_1$ and $c_2$ positive constants, as some authors do. Also, $d$ is the dimension of the image of functions in \eqref{Mnoz}, which can be $d=1$ but also higher for vector-valued functions, we leave $d$ general in statements, but we focus on the case $d=1$. Note that the case of $p=1$ follows immediately from Lemma \ref{LSCP}. However, since the Sobolev space $W^{1,1}$ is not reflexive, even the sequential weak lower semi-continuity does not guarantee the existence of the minimizer. The case $p=\infty$ is also covered in some books (for instance see \cite{DAC}), however, this would require a different technique. We consider the convexity property of $f$ in the last variable
\begin{equation}\label{C}\tag{conv}
\xi\mapsto f(x,u,\xi) \quad  \textup{is convex for all}\quad  (x,u)\in\Omega\times \er^d. 
\end{equation}
In the following text we consider only functional $F$ and set $\mathcal{W}$ satisfying 
\begin{equation}\label{Fin}\tag{fin}
    (\exists u\in\mathcal{W})\quad  F(u)<\infty\quad \wedge\quad (\forall u\in \mathcal{W})\quad F(u)>-\infty.
\end{equation}

Our problem is to prove that the minimizer of \eqref{eq:Fu} on $\mathcal{W}$ exists under conditions \eqref{C}, \eqref{Fin}, and \eqref{PG}. Note that the existence follows immediately from Theorems \ref{MR1} and \ref{ODHAD}. Our contribution is the new student-friendly proof of Theorem \ref{MR1}. The reader is encouraged to compare our proof with the classical proofs in  \cite{ACF, DAC, Marc} or \cite{Giu}, in these proofs useful tools are both approximations by the functions with controlled growth and regularity or replacing and extending the functions on the irregular parts of the domain, also the maximal operator is used. The shorter proof was done later by Ka{\l}amajska \cite{Agnieszka} with help of the theory of Young measures. For the sake of completeness and convenience of the reader, we also include proof of Theorem \ref{ODHAD} although this one is similar to the known ones.

\begin{Theorem}[Weak sequential lower semicontinuity]\label{MR1}
Let $\Omega\subset \er^n$ be a domain with a Lipchitz boundary and let $u_0\in W^{1,p}(\Omega,\er^d)$. Let $f\in\mathcal{C}(\overline{\Omega}\times \er^d\times \er^{nd})$ be a function such that \eqref{C} holds.
Then the functional $F$ given by \eqref{eq:Fu} is sequentially weakly lower semicontinuous.
\end{Theorem}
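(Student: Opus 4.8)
The plan is to prove sequential weak lower semicontinuity of
$$
F(v)=\int_\Omega f(x,v,\nabla v)\dee x
$$
by reducing to the convexity inequality at the linear level and then summing up the local contributions. Suppose $u_k\rightharpoonup u$ weakly in $W^{1,p}(\Omega,\er^d)$. The goal is to show $F(u)\leq\liminf_k F(u_k)$. Since weak convergence in $W^{1,p}$ entails (by Rellich--Kondrachov compactness) strong convergence $u_k\to u$ in $L^p$, we may pass to a subsequence realizing the $\liminf$ along which $u_k\to u$ pointwise almost everywhere. The gradients $\nabla u_k$ converge only weakly in $L^p(\Omega,\er^{nd})$, and this is where the convexity of $f$ in $\xi$ must do the essential work.

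First I would exploit the convexity inequality: for fixed $(x,u)$, since $\xi\mapsto f(x,u,\xi)$ is convex and continuous, at each point we have a supporting affine minorant, so
$$
f(x,u_k,\nabla u_k)\geq f(x,u_k,\nabla u)+g(x,u_k,\nabla u)\cdot(\nabla u_k-\nabla u),
$$
where $g$ plays the role of a subgradient (an element of $\partial_\xi f$). The term $f(x,u_k,\nabla u)\to f(x,u,\nabla u)$ by continuity and the pointwise convergence $u_k\to u$, and one handles the integral of this term by a suitable convergence theorem. The crossing term $\int_\Omega g(x,u_k,\nabla u)\cdot(\nabla u_k-\nabla u)\dee x$ should vanish in the limit: if $g(x,u_k,\nabla u)$ converged strongly in $L^{p'}$ while $\nabla u_k-\nabla u\rightharpoonup 0$ weakly in $L^p$, the product would tend to zero by the strong--weak duality pairing.

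The hard part will be controlling the integrability and convergence of the subgradient term $g(x,u_k,\nabla u)$, since a priori $f$ carries no upper growth bound and the subgradient need not be bounded. I expect the spirit of the elementary argument to be a truncation/localization scheme: partition $\Omega$ into measurable pieces on which $u$, $\nabla u$, and the relevant data are nearly constant (using Egorov's theorem to upgrade a.e.\ convergence to uniform convergence off a small set, and Lusin's theorem to approximate measurable functions by continuous ones), so that on each piece the convexity inequality can be applied with a fixed supporting hyperplane whose slope is controlled. One then estimates the error made on the exceptional small-measure set and lets its measure shrink. The (p-G) lower bound guarantees that the negative parts of $f(x,u_k,\nabla u_k)$ are controlled, legitimizing the application of Fatou-type arguments and preventing mass from escaping.

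Carrying out this scheme elementarily, rather than invoking De Giorgi's semicontinuity machinery or Young measures, is precisely the contribution announced in the abstract, so I anticipate the bulk of the work lies in making the localization quantitative: choosing the partition fine enough that $f(x,u,\nabla u)$ is approximated from below on each piece by its affine support, summing these lower bounds to recover a global lower bound for $\liminf_k F(u_k)$, and showing the accumulated approximation error tends to zero as the partition is refined and the exceptional set vanishes.
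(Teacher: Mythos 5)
Your strategy --- linearize $f$ in $\xi$ via a supporting hyperplane and kill the cross term by pairing a strongly convergent subgradient with the weakly null sequence $\nabla u_k-\nabla u$ --- is a genuinely different route from the paper's, which never introduces subgradients: there, convexity enters only through Jensen's inequality applied to averages of $v_k$ over the cells of a fine partition (Lemma \ref{ApiM}), and weak convergence is exploited only through the convergence of these cell averages, $\fint_{K^j_m}(v-v_k)\to 0$. Your route is the classical one, and it can be completed, but as written it has a genuine gap at its central step.

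The gap: you need $g(x,u_k(x),\nabla u(x))$ to converge \emph{strongly} in $L^{p'}$ so that its pairing with $\nabla u_k-\nabla u\rightharpoonup 0$ vanishes. But Theorem \ref{MR1} assumes only that $\xi\mapsto f(x,u,\xi)$ is convex and continuous, not differentiable; the subdifferential $\partial_\xi f$ is then only an upper semicontinuous set-valued map, and a measurable selection $g$ need not satisfy $g(x,u_k(x),\nabla u(x))\to g(x,u(x),\nabla u(x))$ even pointwise when $u_k\to u$. So the strong convergence your argument hinges on is simply unavailable, and your Egorov/Lusin truncation scheme addresses the \emph{integrability} of the subgradient but not this failure of convergence. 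The standard repairs are substantive additional ideas: either (a) take the supporting slope at the limit point, $g(x)\in\partial_\xi f(x,u(x),\nabla u(x))$, which is $k$-independent and bounded on the truncated set --- but then you must separately control $f(x,u_k,\nabla u_k)-f(x,u,\nabla u_k)$, where $\nabla u_k$ is unbounded and $f$ carries no upper growth bound, forcing a second truncation in $\xi$ whose interaction with weak convergence is precisely the delicate point (the paper's sets $E_{k,j}$ and averages $\xi^\heartsuit_{j,m}$ exist to handle it); or (b) represent $f$ as a countable supremum of functions affine in $\xi$ with continuous coefficients and pass to the limit term by term. Without one of these, the plan does not close.
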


\begin{Theorem}[Coercivity]\label{ODHAD}
Let $\Omega\subset \er^n$ be a domain with a Lipchitz boundary and let $u_0\in W^{1,p}(\Omega,\er^d)$. Let $f\in\mathcal{C}(\overline{\Omega}\times \er^d\times\er^{nd})$ be a function satisfying \eqref{PG}. Let $u_k\in \mathcal{W}$ given by \eqref{Mnoz} be a minimizing sequence of functional $F$ given by \eqref{eq:Fu}. Moreover, let condition \eqref{Fin} hold. Then $u_k$ is bounded in $W^{1,p}(\Omega,\er^d)$. 
\end{Theorem}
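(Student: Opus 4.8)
The plan is to read off coercivity directly from the growth condition \eqref{PG}, using the Poincaré inequality to trade the lower-order term against the gradient and exploiting crucially that $q<p$. Since $u_k$ is a minimizing sequence and \eqref{Fin} supplies some $\bar u\in\mathcal W$ with $F(\bar u)<\infty$, we have $F(u_k)\to\inf_{\mathcal W}F\le F(\bar u)<\infty$, so $M:=\sup_k F(u_k)<\infty$. Inserting the pointwise bound \eqref{PG} into $F$ and integrating, I would first obtain, for every $k$,
\[
c_0\int_\Omega|\nabla u_k|^p\,\dee x\;\le\;F(u_k)-c_1\int_\Omega|u_k|^q\,\dee x-\int_\Omega c_2\,\dee x\;\le\;M-\int_\Omega c_2\,\dee x-c_1\int_\Omega|u_k|^q\,\dee x.
\]
If $c_1\ge 0$ the last term only improves the bound and gives boundedness of $\|\nabla u_k\|_{L^p}$ at once; the substance of the proof is therefore the case $c_1<0$, in which the term $|c_1|\int_\Omega|u_k|^q\,\dee x$ has to be controlled by the gradient.

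The decisive feature is that this dangerous term carries the smaller exponent $q<p$. Because $u_k-u_0\in W^{1,p}_0(\Omega,\er^d)$ and $\Omega$ is bounded, the Poincaré inequality furnishes a constant $C_P$, independent of $k$, with
\[
\|u_k\|_{L^p}\le\|u_0\|_{L^p}+C_P\|\nabla u_k-\nabla u_0\|_{L^p}\le A+C_P\|\nabla u_k\|_{L^p},\qquad A:=\|u_0\|_{L^p}+C_P\|\nabla u_0\|_{L^p}.
\]
Since $|\Omega|<\infty$ and $q<p$, Hölder's inequality gives $\int_\Omega|u_k|^q\,\dee x\le|\Omega|^{1-q/p}\|u_k\|_{L^p}^q$. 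Writing $a_k:=\|\nabla u_k\|_{L^p}$ and collecting the $k$-independent constants, the first display becomes
\[
c_0\,a_k^{\,p}\;\le\;C_3+C_4\,(A+C_P\,a_k)^{q},
\]
with $C_3,C_4\ge 0$ not depending on $k$.

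To close I would use only that $p>q$. The function $t\mapsto c_0 t^{p}-C_4(A+C_P t)^{q}$ is continuous on $[0,\infty)$ and tends to $+\infty$ as $t\to\infty$, because the term $c_0t^p$ dominates the term of order $t^q$; hence there is $R$, independent of $k$, with $c_0 t^{p}-C_4(A+C_P t)^{q}>C_3$ for all $t>R$. The previous inequality then forces $a_k\le R$, that is $\|\nabla u_k\|_{L^p}\le R$ for all $k$. Feeding this back into the Poincaré estimate bounds $\|u_k\|_{L^p}$ as well, and consequently $u_k$ is bounded in $W^{1,p}(\Omega,\er^d)$. The only real obstacle is the sign of $c_1$: everything rests on the strict inequality $q<p$ in \eqref{PG}, which is exactly what allows the coercive term $a_k^{\,p}$ to absorb the lower-order term $a_k^{\,q}$; if $q=p$ were admitted this absorption would fail and coercivity could break down.
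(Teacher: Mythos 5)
Your proof is correct and follows essentially the same route as the paper's: both insert \eqref{PG} into $F(u_k)$, use the Friedrichs/Poincar\'e inequality on $u_k-u_0\in W^{1,p}_0$ together with H\"older's inequality to dominate the $L^q$-term by $\|\nabla u_k\|_p^{q}$, and conclude from the strict inequality $q<p$ that the $p$-th power absorbs the lower-order term. The only (cosmetic) difference is that you argue directly from the uniform bound $\sup_k F(u_k)<\infty$, whereas the paper phrases the same absorption as a proof by contradiction assuming $\|\nabla u_k\|_p\to\infty$.
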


\begin{Corollary}[Existence of minimizer]\label{tutosumato}
Let $\Omega\subset \er^n$ be a domain with a Lipchitz boundary.  Let $f\in\mathcal{C}(\overline{\Omega}\times\er^d\times\er^{nd})$ be a function satisfying \eqref{C} and \eqref{PG}. Let $F$ be the functional given by \eqref{eq:Fu} satisfying \eqref{Fin}. Then $F$ has a minimizer in $\mathcal{W}$ defined by \eqref{Mnoz}.
\end{Corollary}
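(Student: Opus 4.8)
The plan is to apply the direct method of the calculus of variations, using Theorem \ref{ODHAD} to compactify a minimizing sequence and Theorem \ref{MR1} to pass to the limit. Set
\[
m:=\inf_{u\in\mathcal{W}}F(u).
\]
By the first half of \eqref{Fin} there is at least one $u\in\mathcal{W}$ with $F(u)<\infty$, so $m<\infty$; at this stage I would not yet assume that $m$ is finite from below. I would then choose a minimizing sequence $u_k\in\mathcal{W}$ with $F(u_k)\to m$. In particular $F(u_k)$ is bounded above (say $F(u_k)\le m+1$ for large $k$), which is all the growth argument behind Theorem \ref{ODHAD} requires.

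First I would invoke Theorem \ref{ODHAD}: since $u_k$ is a minimizing sequence and \eqref{Fin} holds, $u_k$ is bounded in $W^{1,p}(\Omega,\er^d)$. Because $p\in(1,\infty)$, the space $W^{1,p}(\Omega,\er^d)$ is reflexive, so a bounded sequence admits a weakly convergent subsequence; after relabelling, $u_{k}\rightharpoonup u$ in $W^{1,p}(\Omega,\er^d)$ for some $u$. It then remains to check that $u\in\mathcal{W}$. The set $\mathcal{W}=u_0+W^{1,p}_0(\Omega,\er^d)$ from \eqref{Mnoz} is an affine translate of a closed linear subspace, hence convex and strongly closed, hence weakly sequentially closed; therefore the weak limit $u$ again lies in $\mathcal{W}$.

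Next I would apply Theorem \ref{MR1}: weak sequential lower semicontinuity of $F$ yields
\[
F(u)\le\liminf_{k\to\infty}F(u_k)=m.
\]
By the second half of \eqref{Fin} we have $F(u)>-\infty$, which retroactively shows $m>-\infty$, so $m$ is finite after all. Finally, since $u\in\mathcal{W}$ we also have $F(u)\ge m$ by the definition of the infimum, whence $F(u)=m$ and $u$ is the desired minimizer.

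The only genuinely non-elementary ingredient is the compactness step: extracting a weakly convergent subsequence from the bounded minimizing sequence and verifying that its limit stays in $\mathcal{W}$. This rests on the reflexivity of $W^{1,p}$ for $p\in(1,\infty)$ and on the fact that closed convex sets are weakly (sequentially) closed; this is precisely where the hypothesis $p>1$ enters, and it is the reason the case $p=1$ (where $W^{1,1}$ fails to be reflexive) must be excluded even though the lower semicontinuity of Theorem \ref{MR1} would still be available. Everything else is bookkeeping: combining the upper bound from the minimizing property, the lower bound from semicontinuity, and the two-sided finiteness supplied by \eqref{Fin}.
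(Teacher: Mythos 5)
Your proof is correct and follows essentially the same route as the paper: Theorem \ref{ODHAD} for boundedness of the minimizing sequence, weak compactness of bounded sets in the reflexive space $W^{1,p}$ (the paper cites Alaoglu), and Theorem \ref{MR1} to pass to the limit. You are in fact slightly more careful than the paper, which omits the check that the weak limit remains in $\mathcal{W}$ and the bookkeeping showing the infimum is finite.
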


The paper is structured into Section \ref{Sekce 2}, where we prepare some observations and lemmata and prove Theorem \ref{ODHAD}, Section \ref{Sekce 3}, where we prove the main result, and Appendix \ref{Sekce 4}, where we recall some well-known theorems for convenience of less experienced readers. 

\subsection{Development and connection to literature}

As for the development in the literature, the result we reprove is of Acerbi and Fusco in 1984 \cite{ACF}. The methods they used include maximal operators, an extension of functions, and a lower semicontinuous envelope, the paper also contains the proof even for quasi-convex vector valued $f$. The result succeeded the previous results by Tonelli \cite{Ton}, where the Theorem \ref{MR1} was proved with much stricter conditions. Later the conditions were relaxed in the regularity of $f$, for development of this see \cite{Serr, EkTe} or \cite{MaSb}. For vector-valued $u$, the assumption of convexity is no longer necessary and can be relaxed to various weaker properties. Originally, the result was given by Morrey \cite{Morr} and generalized by Meyers \cite{Mey} under stronger regularity requiring only quasi-convexity. More results were published by Ball, Liu, Dacorogna, Marcellini, and others with finer properties under more relaxed assumptions such as polyconvexity or rank-one convexity. As the definitions of particular generalizations of convexity differ, we are led to study differences and inclusions between these classes, the topic was addressed by \v{S}ver\'{a}k \cite{Sver} and Alibert and Dacorogna \cite{ADac} or recent results by Sil \cite{Sil} and Grabovsky \cite{Grab}. The ongoing trend is to ensure an improvement of convexity type by studying additional properties \cite{up1}. Note that even though \eqref{PG} conditions can not be left out, they can be relaxed into finer scales than powers \cite{VeZe}. An important tool is controlling $f$ by the convex envelope, see \cite{BaKiKr} for studying its properties and dependence on growth conditions. Let us emphasise that the variation problems without convexity are also approached, see \cite{Eke1, Eke}.

Let us briefly overview some milestones and interesting papers in the field surrendering or following the Acerbi-Fusco theorem \cite{ACF}. In 1982, an important result about the regularity of the minimizer was done by Giaquinta and Giusti \cite{GiaGiu}, for a wide survey on this topic we recommend the paper by Mingione \cite{min}. Approximation methods by Marcellini \cite{Marc} improve the Acerbi-Fusco theorem in 1985. Significant relaxation result was presented in 1997 by Fonseca and Mal\'y \cite{FoMa}. A version of convexity based on curl called {{{A}}}-convexity initially considered by Dacorogna \cite{Dac2} is shown to be the optimal variant of definition for the lower semicontinuity property by Fonseca and M\"uller in 1999 \cite{FoMu}. The Young measure theory is used by Ka{\l}amajska in 1997 to shorten the proof of the Acerbi-Fusco result. Also by the theory of Young measure, Kristensen in 1999 provided new approximation results in \cite{Krist} and in 2015 he redefine growth conditions for gradient Young functions and characterise the lower semicontinuity in this setting \cite{Krist2}. The case of the lower semicontinuity among $W^{1,1}$ and BV functions was studied in this setting by Kristensen and Rindler in 2010 \cite{KriRi}. Recent result by Prinari covers the lower semicontinuity and approximation properties for $L^\infty$ functionals \cite{Pri}. The splendid result by Bourdin, Francfort, and Gilles put the variational approach to address Griffith fracture models \cite{BFM} in 2007. The proper list of references would be overwhelming thus we had to omit a lot.

\section{Some auxiliary results and observations}\label{Sekce 2}
In this and the following text we use measure theory, but even though the formulation of lemmata and observations are more general, we apply them only for the Lebesgue measure in spaces $\mathbb{R}$ and $\mathbb{R}^n$.

\begin{Observation}\label{LP1}
Let $(\Omega,\mu)$ be a finite measure space and let $f\in L^0(\Omega)$ and $g\in L^1(\Omega)$ be functions for which $g(x)\leq f(x)$ holds for a.e. $x\in\Omega$. Let $\Omega_k\subset \Omega$ be sets such that $|\Omega\setminus\Omega_k|\to 0$. Then
$$
\lim_k\left(\int_{\Omega_k} f\dee \mu\right)=\int_{\Omega} f\dee \mu.
$$
\end{Observation}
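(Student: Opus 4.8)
The plan is to reduce everything to a nonnegative integrand and then exploit the integrability of the lower bound $g$. Since $g\le f$ a.e., the difference $h:=f-g$ is a nonnegative measurable function, and because $g\in L^1(\Omega)$ is finite a.e.\ we may split $\int_{\Omega_k}f\dee\mu=\int_{\Omega_k}g\dee\mu+\int_{\Omega_k}h\dee\mu$ with no risk of an $\infty-\infty$ ambiguity (additivity of the integral is valid whenever one summand is integrable). Thus it suffices to treat the two pieces separately: show $\int_{\Omega_k}g\dee\mu\to\int_{\Omega}g\dee\mu$ and $\int_{\Omega_k}h\dee\mu\to\int_{\Omega}h\dee\mu$, and then add the limits, the first being finite and the second possibly $+\infty$.

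For the integrable part I would invoke absolute continuity of the Lebesgue integral: for $g\in L^1(\Omega)$ and any $\varepsilon>0$ there is $\delta>0$ with $\int_A|g|\dee\mu<\varepsilon$ whenever $\mu(A)<\delta$. Applying this to $A=\Omega\setminus\Omega_k$ and using $\mu(\Omega\setminus\Omega_k)\to0$ gives $\int_{\Omega\setminus\Omega_k}g\dee\mu\to0$, hence $\int_{\Omega_k}g\dee\mu\to\int_\Omega g\dee\mu$. To stay fully elementary, absolute continuity itself follows from the truncation estimate $\int_A|g|\dee\mu\le M\,\mu(A)+\int_\Omega(|g|-M)^+\dee\mu$, where the last term tends to $0$ as $M\to\infty$ by dominated convergence with dominating function $|g|$.

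The genuinely delicate part is the nonnegative function $h$, and here lies the main obstacle: the hypothesis only gives $\mu(\Omega\setminus\Omega_k)\to0$, i.e.\ $\chi_{\Omega_k}\to1$ in measure but not necessarily pointwise a.e., so neither monotone nor dominated convergence applies directly to $h\chi_{\Omega_k}$. To circumvent this I would truncate. Set $h_M:=\min(h,M)$, which is bounded, hence integrable on the finite measure space, and estimate
$$
\int_{\Omega_k}h\dee\mu\ge\int_{\Omega_k}h_M\dee\mu\ge\int_{\Omega}h_M\dee\mu-M\,\mu(\Omega\setminus\Omega_k).
$$
Letting $k\to\infty$ yields $\liminf_k\int_{\Omega_k}h\dee\mu\ge\int_\Omega h_M\dee\mu$ for every $M$, and letting $M\to\infty$ monotone convergence gives $\int_\Omega h_M\dee\mu\uparrow\int_\Omega h\dee\mu$ (since $h_M\uparrow h$), whence $\liminf_k\int_{\Omega_k}h\dee\mu\ge\int_\Omega h\dee\mu$. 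The reverse inequality $\int_{\Omega_k}h\dee\mu\le\int_\Omega h\dee\mu$ is immediate from $\Omega_k\subset\Omega$ and $h\ge0$, so the limit equals $\int_\Omega h\dee\mu$, and this single argument covers both the finite and the $+\infty$ cases at once. Combining the two parts finishes the proof.
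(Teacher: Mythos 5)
Your proof is correct and complete. The paper states this result as an Observation and supplies no proof of its own, so there is nothing to compare against; your argument fills that gap cleanly. The decomposition $f=g+(f-g)$ with $g$ integrable and $h:=f-g\ge 0$, absolute continuity of the integral for the $g$-part, and the truncation bound $\int_{\Omega_k}h\,\dee\mu\ge\int_\Omega h_M\,\dee\mu-M\,\mu(\Omega\setminus\Omega_k)$ for the nonnegative part correctly sidestep the fact that $\chi_{\Omega_k}\to 1$ only in measure, and they cover both the finite case and $\int_\Omega f\,\dee\mu=+\infty$, which is exactly the generality in which the paper later invokes the observation (for $f-b\ge 0$ with $b\in L^1$).
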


Note, that \textit{partition of a set $M$} means the pair-wise disjoint family $M_m$ such that 
$$
\bigcup_m M_m=M.
$$
Given a partition $\mathcal{P}$ we denote the \textit{norm of the partition} by
$$
\nu(\mathcal{P}):=\max_{j\in m}\left\{\textup{diam}(P_j)\right\}.
$$
\begin{Lemma}\label{ApiM}
Let $\Omega\subset \er^n$ be a bounded domain, $u\in L^1(\Omega)$ and let $\mathcal{P}=(P_j)_{j=1}^m$ be a partition of $\Omega$. Define
$$
u_{\mathcal{P}}:=\sum_{j=1}^m\chi_{P_j}\fint_{P_j} u.
$$
Then
$$
\lim_{\nu(\mathcal{P})\to 0}|\{|u_{\mathcal{P}}-u|>\varepsilon\}|=0\quad(\forall \varepsilon>0).
$$
\end{Lemma}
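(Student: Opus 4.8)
The plan is to reduce the claim to the case of a continuous function and then transfer it to a general $u\in L^1(\Omega)$ by density, the crucial point being that the averaging operator $u\mapsto u_{\mathcal{P}}$ is non-expansive on $L^1(\Omega)$ \emph{uniformly} in $\mathcal{P}$. First I would record two elementary facts about this operator. Linearity $(u+v)_{\mathcal{P}}=u_{\mathcal{P}}+v_{\mathcal{P}}$ is immediate from the linearity of the averages $\fint_{P_j}$. For the contraction property one computes
$$
\|u_{\mathcal{P}}\|_{L^1(\Omega)}=\sum_{j=1}^m|P_j|\left|\fint_{P_j}u\right|=\sum_{j=1}^m\left|\int_{P_j}u\right|\le\sum_{j=1}^m\int_{P_j}|u|=\|u\|_{L^1(\Omega)},
$$
using that the $P_j$ are pairwise disjoint and cover $\Omega$.

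Next I would prove the statement for $g\in\mathcal{C}(\overline{\Omega})$. As $\overline{\Omega}$ is compact, $g$ is uniformly continuous with some modulus $\omega_g$. For $x\in P_j$ we have
$$
|g_{\mathcal{P}}(x)-g(x)|=\left|\fint_{P_j}\bigl(g(y)-g(x)\bigr)\dee y\right|\le\fint_{P_j}|g(y)-g(x)|\dee y\le\omega_g(\nu(\mathcal{P})),
$$
since every $y\in P_j$ satisfies $|y-x|\le\operatorname{diam}(P_j)\le\nu(\mathcal{P})$. Hence $\|g_{\mathcal{P}}-g\|_{L^\infty(\Omega)}\le\omega_g(\nu(\mathcal{P}))\to 0$, so for continuous data the convergence holds even uniformly, and in particular in measure.

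Finally, for arbitrary $u\in L^1(\Omega)$ and fixed $\varepsilon>0$ I would fix a tolerance $\eta>0$, pick $g\in\mathcal{C}(\overline{\Omega})$ with $\|u-g\|_{L^1(\Omega)}<\eta$ (density of continuous functions in $L^1$), and split
$$
|u_{\mathcal{P}}-u|\le|(u-g)_{\mathcal{P}}|+|g_{\mathcal{P}}-g|+|g-u|.
$$
On the set where the left-hand side exceeds $\varepsilon$, at least one summand exceeds $\varepsilon/3$. The middle set is empty once $\nu(\mathcal{P})$ is small enough that $\omega_g(\nu(\mathcal{P}))<\varepsilon/3$, while Markov's inequality together with the contraction property gives
$$
|\{|g-u|>\varepsilon/3\}|\le\tfrac{3}{\varepsilon}\|u-g\|_{L^1}<\tfrac{3\eta}{\varepsilon},\qquad |\{|(u-g)_{\mathcal{P}}|>\varepsilon/3\}|\le\tfrac{3}{\varepsilon}\|(u-g)_{\mathcal{P}}\|_{L^1}\le\tfrac{3\eta}{\varepsilon}.
$$
Thus $|\{|u_{\mathcal{P}}-u|>\varepsilon\}|\le 6\eta/\varepsilon$ for every partition of sufficiently small norm, and letting $\eta\to0$ completes the argument.

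The main obstacle — or rather the point that makes the density step legitimate — is that all estimates must hold \emph{uniformly over all partitions} of a given norm, not merely along a single sequence: the threshold on $\nu(\mathcal{P})$ comes from the modulus of continuity of the \emph{one} function $g$, whereas the approximation error is absorbed by the uniform bound $\|(u-g)_{\mathcal{P}}\|_{L^1}\le\|u-g\|_{L^1}$. This is precisely why the non-expansiveness of the averaging operator, rather than the (routine) continuous case, is the key ingredient.
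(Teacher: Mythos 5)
Your proof is correct and follows essentially the same route as the paper: settle the continuous case first, then transfer to general $u\in L^1(\Omega)$ via density, using the uniform non-expansiveness $\|(u-g)_{\mathcal{P}}\|_{L^1}\le\|u-g\|_{L^1}$ together with Chebyshev's inequality on the two approximation terms. The only (harmless) variation is that you approximate by functions continuous on $\overline{\Omega}$ and so get a global $L^\infty$ bound $\|g_{\mathcal{P}}-g\|_{L^\infty}\le\omega_g(\nu(\mathcal{P}))$, whereas the paper works with functions continuous only on $\Omega$ and must excise a boundary layer $\Omega\setminus\Omega_{2\eta}$ in the continuous step; you also make explicit the contraction computation that the paper merely asserts.
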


\begin{proof}
    Let us first show that the statement is valid for continuous functions. Pick $\varepsilon,\gamma>0$. Given a positive $\eta$ let us denote
    $$
    \Omega_\eta:=\{x\in\Omega: \di(x,\Omega^c)>\eta\}.
    $$
    Now we choose $\eta$ so small that
    $$
    |\Omega\setminus \Omega_{2\eta}|<\gamma.
    $$
    Since $u$ is uniformly continuous on $\Omega_\eta$, there exists $\delta>0$ such that 
    $$
    |x-y|<\delta\Rightarrow |u(x)-u(y)|<\varepsilon \quad \forall x,y\in\Omega_\eta. 
    $$
    Now choose a partition $\mathcal{P}$ with
    \begin{equation}\label{mez}
     \nu({\mathcal{P}})<\min\{\eta,\delta\}.   
    \end{equation}
    For all $x\in\Omega_{2\eta}$ we have that 
    $$
    |u(x)-u_{\mathcal{P}}(x)|<\varepsilon.
    $$
    Hence
    $$
    |\{|u-u_{\mathcal{P}}|>\varepsilon\}|<\gamma
    $$
    as soon as \eqref{mez} holds. The convergence in measure for the continuous function is proved.

 Given a $\alpha,\varepsilon>0$ and $u\in L^1(\Omega)$ we first choose $v$ continuous such that
$$
\|u-v\|_{L^1}\leq \varepsilon \alpha,
$$
then by the triangle inequality
    $$
    \begin{aligned}
    \left|\{|u-u_{\mathcal{P}}|>3\varepsilon\}\right|&\leq |\{|u-v|>\varepsilon\}|+ |\{|v-v_{\mathcal{P}}|>\varepsilon\}|+|\{|u_{\mathcal{P}}-v_{\mathcal{P}}|>\varepsilon\}|    \end{aligned}\\
    =:I+II+III.
    $$
Note that $u\mapsto u_{\mathcal{P}}$ is non-expansive mapping in $L^1$, so we estimate
$$
\|u_{\mathcal{P}}-v_{\mathcal{P}}\|_{L^1}\leq \varepsilon\alpha.
$$
Hence, by Chebyschev inequality \ref{thmChe} we have $I+III<2\alpha$. Since $v$ is continuous it is enough to choose the norm of the partition small to obtain $II<\alpha$.
\end{proof}
\begin{proof}[Proof of Theorem \ref{ODHAD}]
Let $\tilde{u}_k\in \mathcal{W}$ be a minimizing sequence of $F$. Denote
\begin{equation}\label{eq:oddel}
u_k:=\tilde{u}_k-u_0.
\end{equation}
First, we prove that the sequence is bounded in $W^{1,p}(\Omega)$. Let us suppose the contrary that $u_k$ is unbounded. Without loss of generality (otherwise pass to a proper subsequence) suppose 
$$
0<\|u_k\|_{W^{1,p}}\to \infty,
$$
the Friedrich inequality thm. \ref{thmFr} implies
$$
\|\nabla u_k\|_p\to\infty.
$$
This implication does not have to be true in general, but thanks to $u_k\equiv 0$ on the boundary it has to hold.
Now, by \eqref{PG} we estimate
$$
\begin{aligned}
F(\tilde{u}_k)&\geq c_0\|\nabla \tilde{u}_k\|_p^p-|c_1|\|\tilde{u}_k\|_q^q+\|c_2\|_{1}\\
&\stackrel{\eqref{eq:oddel}}{\geq} c_0(\|\nabla u_k\|_p^p-\|\nabla u_0\|_p^p)-|c_1|(\|u_k\|_{q}^q+\|u_0\|_q^q)+\|c_2\|_{1}\\
&\stackrel{\text{Thm. \ref{thmFr}}}{\geq} c_0\|\nabla u_k\|_p^p-C(\Omega)c_1\|\nabla u_k\|_q^q+C(u_0,c_1,c_2, \Omega)\\
&\stackrel{\text{H\"older in.}}{\geq} c_0\|\nabla u_k\|_p^p-C(c_1,\Omega)\|\nabla u_k\|_p^{q}+C(u_0,c_1,c_2,\Omega)\\
&=\|\nabla u_k\|_p^p\left(c_0-C(c_1,\Omega)\|\nabla u_k\|^{q-p}-\|\nabla u_k\|_{p}^{-p}C(u_0,c_1,c_2,\Omega)\right).
\end{aligned}
$$
As $p>q$, the second and the third term in the bracket tend to zero and $c_0>0$, so the remaining expression diverges to infinity as $k$ tends to infinity. But this is a contradiction to the fact that $u_k$ is a minimizing sequence.

\end{proof}

\section{Proof of Theorem \ref{MR1} and Corollary \ref{tutosumato}}\label{Sekce 3}
Let $u\in L^0(\Omega,\er^d)$ and $v\in L^p(\Omega, \er^z)$. Given such functions let us consider
\begin{equation}\label{funkcJ}
J_{\Omega}(u,v):=\int_\Omega f(x,u(x),v(x))\dee x.    
\end{equation}
Let us denote the norm topology in $L^p$ by $\mathcal{L}^p$ and the topology of weak convergence by $\mathcal{L}^p_w$. We also denote the convergence in the Lebesgue measure by $\stackrel{\dee x}{\to}$. Let us define the topology
$$
\tau=\mathcal{L}^0\times\mathcal{L}^p_w
$$
for $(u_k,v_k)\in L^0(\Omega,\er^d)\times L^p(\Omega,\er^z)$ by
$$
(u_k,v_k)\stackrel{\tau}{\to}(u,v) \quad \textup{if }\left(u_k\stackrel{\dee x}{\to}u\wedge v_k\stackrel{L^p}{\rightharpoonup} v\right).
$$

\begin{Lemma}\label{LSCP}
Let $\Omega\subset \er^n$ be a bounded domain, $p\in[1,\infty)$, $f\in\mathcal{C}(\overline{\Omega}\times \er^d\times \er^z)$ be function satisfying \eqref{C} and let  $b\in L^1(\Omega)$ be a function such that 
\begin{equation}\label{star}
f(x,u,v)\geq b(x)\quad \forall (x,u,v)\in\Omega\times\er^d\times \er^z.
\end{equation}
Then the functional $J_{\Omega}$ defined in \eqref{funkcJ} is sequentially lower semicontinuous with respect to topology $\tau$. 
\end{Lemma}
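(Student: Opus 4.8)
The plan is to prove the one-sided inequality $J_\Omega(u,v)\le\liminf_k J_\Omega(u_k,v_k)$ whenever $u_k\to u$ in measure and $v_k\rightharpoonup v$ in $L^p$. I begin with four harmless reductions. Since $b\in L^1(\Omega)$ is independent of $(u,v)$, replacing $f$ by $f-b$ preserves \eqref{C} and shifts $J_\Omega$ by the finite constant $\int_\Omega b\,\dee x$, so I may assume $f\ge0$. Choosing a subsequence that attains the $\liminf$ and, since convergence in measure yields an a.e.\ convergent subsequence, a further one along which $u_k\to u$ pointwise a.e., I may assume the limit exists and $u_k\to u$ a.e. As a weakly convergent sequence is bounded, I fix $C$ with $\|v_k\|_p\le C$. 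Finally, replacing $f$ by its Moreau regularization $f^\delta(x,u,\xi):=\inf_{\eta\in\er^z}\big(f(x,u,\eta)+\tfrac1{2\delta}|\xi-\eta|^2\big)$, which is nonnegative, convex and of class $C^1$ in $\xi$, satisfies $f^\delta\le f$, and increases to $f$ as $\delta\to0$, I may assume $\xi\mapsto f(x,u,\xi)$ is $C^1$; the inequality for $f$ then follows from the one for $f^\delta$ by monotone convergence in $\delta$.

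Next I localize. Given $\varepsilon>0$, Lusin's theorem applied to $u,v$ and Egorov's theorem applied to $u_k\to u$ furnish a compact set $\Omega_0\subseteq\Omega$ with $|\Omega\setminus\Omega_0|<\varepsilon$ on which $u,v$ are continuous (hence $|u|,|v|\le M$) and $u_k\to u$ uniformly. Since $f\ge0$, discarding $\Omega\setminus\Omega_0$ only lowers $\int_\Omega f(x,u_k,v_k)\,\dee x$, while Observation \ref{LP1} shows $\int_{\Omega_0}f(x,u,v)\,\dee x\to\int_\Omega f(x,u,v)\,\dee x$ as $\varepsilon\to0$; it therefore suffices to prove $\int_{\Omega_0}f(x,u,v)\,\dee x\le\liminf_k\int_{\Omega_0}f(x,u_k,v_k)\,\dee x$.

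The core is a supporting-hyperplane (Jensen) estimate on a fine partition $\mathcal P=(P_i)$ of $\Omega_0$. On $P_i$ set $\bar v_i:=\fint_{P_i}v_k$, a bounded vector once $\mathcal P$ is fixed and $k$ large. Convexity of $\xi\mapsto f(x,u_k(x),\xi)$ gives
$$
f(x,u_k(x),v_k(x))\ge f(x,u_k(x),\bar v_i)+\big\langle\nabla_\xi f(x,u_k(x),\bar v_i),\,v_k(x)-\bar v_i\big\rangle.
$$
Integrating over $P_i$ and summing, the principal linear part vanishes because $\int_{P_i}(v_k-\bar v_i)\,\dee x=0$, and the first term sums to $\int_{\Omega_0}f(x,u_k(x),(v_k)_{\mathcal P}(x))\,\dee x$, where $(v_k)_{\mathcal P}$ is the piecewise average of Lemma \ref{ApiM}. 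Decisively, $f$ is evaluated only at the bounded arguments $\bar v_i$, so no upper growth hypothesis on $f$ is used. Holding $\mathcal P$ fixed and letting $k\to\infty$, uniform convergence $u_k\to u$ together with $(v_k)_{\mathcal P}\to v_{\mathcal P}$ (the finitely many averages converge, testing $v_k\rightharpoonup v$ against the $\chi_{P_i}$) yields, by bounded convergence, $\int_{\Omega_0}f(x,u_k,(v_k)_{\mathcal P})\,\dee x\to\int_{\Omega_0}f(x,u,v_{\mathcal P})\,\dee x$. Thus $\liminf_k\int_{\Omega_0}f(x,u_k,v_k)\,\dee x\ge\int_{\Omega_0}f(x,u,v_{\mathcal P})\,\dee x$. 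Refining the partition so that $\nu(\mathcal P)\to0$, Lemma \ref{ApiM} gives $v_{\mathcal P}\to v$ in measure, whence continuity of $f$ and Observation \ref{LP1} give $\int_{\Omega_0}f(x,u,v_{\mathcal P})\,\dee x\to\int_{\Omega_0}f(x,u,v)\,\dee x$; letting $\varepsilon\to0$ restores $\Omega$.

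The main obstacle is the residual linear term left after the cancellation, namely $\int_{P_i}\langle\nabla_\xi f(x,u_k(x),\bar v_i)-s_i^0,\,v_k-\bar v_i\rangle\,\dee x$ with $s_i^0$ a fixed slope on $P_i$: the factor $v_k-\bar v_i$ is not small where $|v_k|$ is large. I would bound its absolute value by $\theta\sum_i\int_{P_i}|v_k-\bar v_i|\,\dee x\le 2\theta\,|\Omega_0|^{1/p'}C$, using the uniform $L^p$-bound, where $\theta:=\max_i\sup_{x\in P_i}|\nabla_\xi f(x,u_k(x),\bar v_i)-s_i^0|$. Here the preliminary $C^1$ reduction pays off: since $\nabla_\xi f^\delta$ is continuous, hence uniformly continuous on the compact set of relevant arguments, $\theta$ is forced to be small once the partition is fine and $u_k$ is uniformly close to $u$. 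Keeping the correct order of limits — first $k\to\infty$ with $\mathcal P,\delta,\varepsilon$ fixed, then $\nu(\mathcal P)\to0$, then $\varepsilon\to0$, and finally $\delta\to0$ — so that at each stage the arguments $\bar v_i$ remain bounded and $\theta$ genuinely shrinks, is the delicate bookkeeping that the full proof must carry out.
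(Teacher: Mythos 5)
Your argument is correct in outline, but it is a genuinely different proof from the one in the paper. The paper never linearizes $f$: it keeps $f$ merely convex and applies Jensen's inequality to the averages $\fint_{K^j_m}v_j\chi_{E_{j,a_j}}$, which forces it to truncate $v_k$ on the sets $E_{k,j}=\{|v_k|\le j\}$, run a double partition $K^j_m$, $Q^j_{m,t}$, and juggle the moduli $\eta_j,\delta_j,\gamma_j,a_j$ so that every evaluation of $f$ happens inside a fixed compact set. You instead use the supporting-hyperplane inequality $f(x,u_k,v_k)\ge f(x,u_k,\bar v_i)+\langle\nabla_\xi f(x,u_k,\bar v_i),v_k-\bar v_i\rangle$, which evaluates $f$ and its slope only at the bounded points $\bar v_i$ and relegates the unbounded part of $v_k$ to a linear term killed by $\int_{P_i}(v_k-\bar v_i)=0$; this is essentially the classical De Giorgi--Ioffe--Ekeland--Temam route. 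What you buy is a much shorter bookkeeping (no truncation of $v_k$, a single partition); what you pay is the Moreau regularization, whose properties (convexity, $C^{1,1}$ in $\xi$, $f^\delta\nearrow f$, and above all \emph{joint} continuity of $(x,u,\xi)\mapsto\nabla_\xi f^\delta$, which you need for the uniform-continuity bound on $\theta$ and which requires a continuity argument for the proximal map) are standard but sit outside the ``elementary measure theory only'' toolbox the paper is advertising.

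One point in your sketch deserves to be made explicit, because as written it is the only place the argument could silently fail: to conclude that $\theta$ shrinks you need the modulus of continuity of $\nabla_\xi f^\delta$ on a compact set of $\xi$-arguments that does \emph{not} depend on the partition, whereas your a priori bound $|\bar v_i|\le|P_i|^{-1/p}\|v_k\|_p$ blows up as $\nu(\mathcal P)\to0$. The rescue is exactly your order of limits: for fixed $\mathcal P$, $\bar v_i^k\to\fint_{P_i}v$ as $k\to\infty$, and since Lusin gives $|v|\le M$ on $\Omega_0$, all limiting averages lie in $\overline{B(0,M)}$, so $\limsup_k\theta_k$ is controlled by the modulus on the fixed compact $\Omega_0\times[-M-1,M+1]^d\times\overline{B(0,M+1)}$ and then genuinely tends to $0$ with $\nu(\mathcal P)$. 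State this; it is the analogue of the paper's truncation step and the reason your ``delicate bookkeeping'' actually closes.
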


\begin{Lemma}\label{Jcon}
Let $\Omega\subset \er^n$ be a bounded domain, $p\in(1,\infty)$ and let $f\in\mathcal{C}(\overline{\Omega}\times\er^d\times\er^z)$ be a function satisfying \eqref{PG} and \eqref{C}. Then $J_{\Omega}$ defined in \eqref{funkcJ} is sequentially lower semicontinuous with respect to topology $\mathcal{L}^q\times \mathcal{L}^p_{w}$.    
\end{Lemma}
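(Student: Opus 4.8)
The plan is to reduce Lemma~\ref{Jcon} to Lemma~\ref{LSCP} by absorbing the lower-order term of the growth condition into the integrand. Suppose $u_k\to u$ in $L^q(\Omega,\er^d)$ and $v_k\rightharpoonup v$ in $L^p(\Omega,\er^z)$. First I would observe that strong convergence in $L^q$ implies convergence in measure, so $u_k\stackrel{\dee x}{\to}u$; hence $(u_k,v_k)\stackrel{\tau}{\to}(u,v)$ in the weaker topology $\tau=\mathcal{L}^0\times\mathcal{L}^p_w$ of Lemma~\ref{LSCP}. The only reason one cannot invoke that lemma immediately is that \eqref{PG} does not supply the uniform $L^1$ lower bound \eqref{star}: the offending terms are $c_0|\xi|^p$ and $c_1|u|^q$, which are not bounded below by a fixed integrable function.

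The key device is to set $\tilde f(x,u,\xi):=f(x,u,\xi)-c_1|u|^q$. This function is still continuous on $\overline{\Omega}\times\er^d\times\er^z$ (we subtract a continuous function of $u$ alone) and still convex in $\xi$ (the subtracted quantity is independent of $\xi$, so \eqref{C} is preserved), and by \eqref{PG} it satisfies $\tilde f(x,u,\xi)\geq c_0|\xi|^p+c_2(x)\geq c_2(x)$. Thus \eqref{star} holds for $\tilde f$ with $b=c_2\in L^1(\Omega)$, regardless of the sign of $c_1$, and $\tilde f$ fulfils every hypothesis of Lemma~\ref{LSCP}. Denoting by $\tilde J_{\Omega}$ the functional built from $\tilde f$ and applying Lemma~\ref{LSCP} along $(u_k,v_k)\stackrel{\tau}{\to}(u,v)$, I obtain $\liminf_k \tilde J_{\Omega}(u_k,v_k)\geq \tilde J_{\Omega}(u,v)$.

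It then remains to transfer this back to $J_{\Omega}$. By construction $J_{\Omega}(u_k,v_k)=\tilde J_{\Omega}(u_k,v_k)+c_1\int_{\Omega}|u_k|^q\dee x$, and strong $L^q$ convergence forces $\int_{\Omega}|u_k|^q\dee x\to\int_{\Omega}|u|^q\dee x$ by continuity of the norm. Using that $\liminf_k(a_k+b_k)=\liminf_k a_k+\lim_k b_k$ whenever $b_k$ converges, I would conclude $\liminf_k J_{\Omega}(u_k,v_k)=\liminf_k\tilde J_{\Omega}(u_k,v_k)+c_1\int_{\Omega}|u|^q\dee x\geq \tilde J_{\Omega}(u,v)+c_1\int_{\Omega}|u|^q\dee x=J_{\Omega}(u,v)$, which is the desired lower semicontinuity (the argument is valid even when either side equals $+\infty$, since $c_1\int_{\Omega}|u|^q\dee x$ is finite for $u\in L^q$ and $\tilde J_{\Omega}$ is bounded below by $\int_\Omega c_2\,\dee x>-\infty$).

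The one point requiring genuine care, and precisely the reason \emph{strong} rather than merely weak $L^q$ convergence is assumed, is the convergence of $\int_{\Omega}|u_k|^q\dee x$ in the last paragraph: under only weak convergence the map $u\mapsto\int_\Omega|u|^q\dee x$ is lower semicontinuous but need not converge, and the identity $\liminf_k(a_k+b_k)=\liminf_k a_k+\lim_k b_k$ would fail, so the whole reduction would collapse. Everything else is a routine verification that $\tilde f$ inherits continuity and convexity and that convergence in $L^q$ yields convergence in measure.
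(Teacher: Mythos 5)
Your proposal is correct and follows essentially the same route as the paper: both define $\tilde f(x,u,\xi):=f(x,u,\xi)-c_1|u|^q$, verify that $\tilde f$ meets the hypotheses of Lemma~\ref{LSCP} (with $b=c_2$ supplying \eqref{star}), and then recover $J_\Omega$ by adding back $c_1\int_\Omega|u_k|^q\,\dee x$, which converges thanks to the strong $L^q$ convergence. Your write-up in fact spells out the verification of \eqref{star} and the $\liminf$ bookkeeping more explicitly than the paper does.
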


\begin{proof}[Proof of Lemma \ref{LSCP}]
Choose $\varepsilon>0$. Let $(u_k,v_k)$ be a sequence convergent to $(u,v)$ in the topology $\tau$. Since $u_k$ converges to $u$ in measure, by the Riesz thm. \ref{thmRi} one may assume that $u_k(x)\to u(x)$ for a.e. $x\in\Omega$. Hence by the Yegorov thm. \ref{thmYEG} and the Luzin thm. \ref{thmLu} for every $l$ natural there exists a compact set $\Omega_l\subset\Omega$ such that 
$$
\left(|\Omega\setminus\Omega_l|\leq \frac{1}{l}\right)\wedge \left(u_k\rightrightarrows u \quad\textup{on }\Omega_l\right)\wedge \left(u\in \mathcal{C}(\Omega_l)\right).
$$
Define 
$$
S:=\sup_k\sup_{x\in\Omega_l}|u_k(x)|.
$$
We may assume that $S<\infty$. Note that the function $(x,u)\mapsto f(x,u,\xi)$ is uniformly continuous on $\Omega_l\times [-S,S]\times\overline{B(0,2j)}$ for arbitrary $j$ natural. Now, let us denote by $\eta_j$ the strictly positive number such that for all $x,\overline{x}\in\Omega_l, u,\overline{u}\in[-S,S]$ 
\begin{equation}\label{DUL}
|(\overline{x},\overline{u})-(x,u)|<6\eta_j
\end{equation}
implies
$$
|f(\overline{x},\overline{u},\xi)-f(x,u,\xi)|<\varepsilon\quad (\forall |\xi|\leq 2j).
$$
Without loss of generality one may suppose that $\eta_j$ is decreasing and
\begin{equation}\label{RK}
|u(x)-u_k(x)|\leq \eta_k\quad(\forall x \in\overline{\Omega_l}).   
\end{equation}
Moreover, since the function $u$ is uniformly continuous, by the triangle inequality and double usage of \eqref{RK}, for each $j$ there exists positive   $\delta_j<\eta_j$ ($\delta_j$ decreasing) such that for all $x,y\in\overline{\Omega}$, one has 
$$
|x-y|<\delta_j\quad \textup{implies} \quad |(x,u_k(x))-(y,u_k(y))|<3\eta_j\quad(\forall k\geq j).  
$$
Use the previous estimates to observe
\begin{equation}\label{sit2}
|x-y|<\delta_j\quad \textup{implies} \quad |(x,u_k(x))-(y,u(y))|<6\eta_j\quad(\forall k\geq j).  
\end{equation}
For $j,k$ naturals denote
\begin{equation}\label{defEjkGj}
E_{k,j}:=\{|v_k|\leq j\}\cap\Omega_l,\quad G_j:=\{|v|\leq j\}\cap\Omega_l.
\end{equation}
Note that, using the Chebychev inequality thm. \ref{thmChe} and boundedness of $v_k$ in $L^p$ we obtain that there exists $C>0$ independent of $j,k$ such that 
\begin{equation}\label{mir}
|\Omega_l\setminus G_j|\leq \frac{C}{j^p}\quad \text{ and }\quad |\Omega_l\setminus E_{k,j}|\leq \frac{C}{j^p}.
\end{equation}

By uniform continuity of $(x,u,\xi)\mapsto f(x,u,\xi)$ on $\Omega_l\times[-S,S]\times \overline{B(0,2j)}$, for every natural $j$ there exists $\gamma_j$ such that  
\begin{equation}\label{PANTAU}
|\xi-\overline{\xi}|<\gamma_j\Rightarrow |f(x,u,\xi)-f(x,u,\overline{\xi})|<\varepsilon\quad(\forall(x,u)\in\overline{\Omega_l}\times [-S,S]\quad \textup{and}\quad \forall \xi,\overline{\xi}\in \overline{B(0,2j)}
\end{equation}

By Lemma \ref{ApiM} we obtain that for each $j$ natural there exists a finite partition of $G_j$, consider such a partition $\mathcal{P}=(K^j_m)_{m=1}^{M_j}$ satisfying
\begin{enumerate}[\upshape(i)]
    \item $\nu(\mathcal{P})<\delta_j$.
    \item
    $
    \sum_m\left|\{x\in K^j_m: |v-\fint_{K^j_m}v|>\gamma_j\}\right|<1/j.$
\end{enumerate}
By the weak convergence of $v_k$ to $v$ in $L^p$ one has
$$
\lim_k \fint_{K^j_m}(v-v_k)=0.
$$
Therefore, one may assume without the loss of generality
\begin{equation}\label{konvprum}
 \left|\fint_{K^j_m}(v-v_k)\right|\leq \gamma_k \quad (\forall k\geq j)   
\end{equation}
(if this doesn't hold one can pass to the appropriate sub-sequence).
\noindent
Set 
\begin{equation}\label{BytostQ}
    Q^j_m:= \left\{x\in K^j_m: |v(x)-\fint_{K^j_m}v|\leq \gamma_j\right\}
\end{equation}
and denote 
\begin{equation}\label{BytostQ naporcovana}
Q_j:=\bigcup_m Q^j_m.
\end{equation}
Note that by \eqref{defEjkGj} and the property (ii) of the parts $K^j_m$ one has $ |G_j\setminus Q_j|\leq \frac{1}{j}.$
Pick $a_j>j$ such that 
\begin{equation}\label{ajaj}
\left|\fint_{K^j_m} v_j\chi_{E^c_{j, a_j}}\dee x\right|< \gamma_j.
\end{equation}
Moreover, let $Q_{m,t}^j$ be a partition of $Q^j_m$ such that
$$
\textup{diam}(Q_{m,t}^j)< \delta_{a_j},
$$
and let us choose arbitrary $x^j_{m,t}\in Q_{m,t}^j$, creating $2\delta_{a_j}$ net. For the following estimate denote
\begin{equation}\label{znackaT}\begin{aligned}
\xi_{j,m}:=&\fint_{K^j_m} v\dee x, & \xi^\spadesuit_{j,m}:=&\fint_{K^j_m} v_j\dee x, \\
\xi^\heartsuit_{j,m}:=&\fint_{K^j_m} v_j\chi_{E_{j,a_j}}\dee x, &\tilde{v}_j:=&v_j\chi_{E_{j,a_j}}, \\
T:=&\max_{(x,u)\in\overline{\Omega}_l\times[-S,S]}|f(x,u,0)|.&&
\end{aligned}\end{equation}
Note that \eqref{ajaj} estimate the difference of functions above as
\begin{equation}\label{ajajaj}
\left|\xi^\spadesuit_{j,m}-\xi^\heartsuit_{j,m}\right|=\left|\fint_{K^j_m} v_j\chi_{E^c_{j, a_j}}\dee x\right|< \gamma_j.
\end{equation}
Let us estimate 
\begin{eqnarray*}
\int_{Q_j}f(x,u,v)\dee x&\stackrel{\eqref{sit2}, \eqref{DUL}}{\leq}& \sum_m \int_{Q^j_m}f(x^j_m,u_j(x^j_m),v)\dee x+|\Omega_l|\varepsilon\\
&\stackrel{\eqref{BytostQ}, \eqref{PANTAU}}{\leq}& \sum_m \int_{Q^j_m}f(x^j_m,u_j(x^j_m),\xi_{j,m})\dee x+2|\Omega_l|\varepsilon\\
&\stackrel{\eqref{konvprum}, \eqref{PANTAU}}{\leq}& \sum_m \int_{Q^j_m}f(x^j_m,u_j(x^j_m),\xi^\spadesuit_{j,m})\dee x+3|\Omega_l|\varepsilon\\
&\stackrel{\eqref{sit2}, \eqref{DUL}}{\leq}& \sum_{m}\sum_t\int_{Q_{m,t}^j}f(x^j_{m,t},u_j(x^j_{m,t}), \xi^\spadesuit_{j,m})\dee x+4|\Omega_l|\varepsilon\\
&\stackrel{\eqref{ajajaj}, \eqref{PANTAU}}{\leq}& \sum_{m}\sum_t\int_{Q_{m,t}^j}f(x^j_{m,t},u_j(x^j_{m,t}),\xi^\heartsuit_{j,m}) \dee x+5|\Omega_l|\varepsilon\\
&\stackrel{\text{Thm }\ref{thmJe}}{\leq}&\sum_{m}\sum_t \int_{Q_{m,t}^j}f(x^j_{m,t},u_j(x^j_{m,t}), \tilde{v}_j)\dee x+5|\Omega_l|\varepsilon.
\end{eqnarray*}
Note that the usage of the Jensen inequality in the last estimate is the crucial step, where convexity \eqref{C} comes to play. Note that in this step, we may question if more general versions such as quasi-convexity can be used. Following the fact that $x^j_{m,t}$ is a 2$\delta_{a_j}$ net and the inequalities \eqref{DUL},\eqref{sit2} and \eqref{RK}, we estimate
\begin{eqnarray*}
&\leq& \sum_m\sum_t \int_{Q_{m,t}^j}f(x,u_j, \tilde{v}_j)\dee x+6|\Omega_l|\varepsilon\\
&\stackrel{\eqref{BytostQ naporcovana},\text{part. of } Q_m^j}{=}&\int_{Q_j}f(x,u_j,\tilde{v}_j)\dee x+6|\Omega_l|\varepsilon.
\end{eqnarray*}
To estimate we split th domain into $Q_j\cap E_{j, a_j}$ where $\tilde{v}_j=v_j$ and $Q_j\cap E_{j, a_j}^c$, where we estimate $|f|\leq T$. In the last estimate, we add to the right-hand side integral of non-negative $f-b$ over the set $\Omega_l\setminus(E_{j,a_j}\cap Q_j)$, so we estimate
\begin{eqnarray*}
&\stackrel{\eqref{znackaT}}{\leq}& \int_{E_{j,a_j}\cap Q_j}f(x,u_j,v_j)\dee x+T|E_{j,a_j}^c|+6|\Omega_l|\varepsilon\\
&\stackrel{\eqref{mir}}{\leq}&  \int_{\Omega_l}f(x,u_j,v_j)\dee x+\frac{TC}{a_j^p}+6|\Omega_l|\varepsilon-\int_{\Omega_l\setminus(E_{j,a_j}\cap Q_j)}b(x)\dee x.
\end{eqnarray*}
Passing to the limit with $\varepsilon\to 0_+$ yields
$$
J_{Q_j}(u,v)\leq J_{\Omega_l}(u_j,v_j)+\frac{TC}{a_j^p}-\int_{\Omega_l\setminus(E_{j,a_j}\cap Q_j)}b(x)\dee x.
$$
 Using Observation \ref{LP1} and \eqref{star}, passing to limit for $j\to\infty$ we obtain
$$
J_{\Omega}(u,v)\leq \limsup_{j}J_{\Omega_l}(u_j,v_j),
$$
which is equivalent to sequential lower semicontinuity of $J_{\Omega}$ on the topological space $(L^0(\Omega)\times L^p(\Omega),\tau)$.
\end{proof}

\begin{proof}[Proof of the Lemma \ref{Jcon}]
Given a function satisfying \eqref{PG}, define the auxiliary function
$$
\tilde{f}(x,u,\xi):=f(x,u,\xi)-c_1|u|^q
$$
and functional 
$$
\tilde{J}(u,v):=\int_{\Omega}\tilde{f}(x,u,v)\dee x.
$$
By the Lemma \ref{LSCP} the functional $\tilde{J}$ is sequentially lower semicontinuous with respect to topology $\tau$. Hence it is sequentially lower semicontinuous with respect to $\mathcal{L}^q\times \mathcal{L}^p_w$ and thus
$$
    J(u,v)=\tilde{J}(u,v)+c_1\int_{\Omega}|u|^q\dee x\leq \liminf_{k}\tilde{J}(u_k,v_k)+c_1\lim_k\int_{\Omega}|u_k|^q\dee x=\liminf_{k}J(u_k,v_k).
$$
\end{proof}
\begin{proof}[Proof of Theorem \ref{MR1}]
Let  $u_k\rightharpoonup u$ in $W^{1,p}$. Using the Rellich-Kondrachev theorem \ref{thmRK} without loss of generality, we may suppose that $u_k\to u$ strongly in $L^p$ while $\nabla u_k\stackrel{L^p}{\rightharpoonup} \nabla u$. Hence by the preceding Lemma \ref{Jcon}, we obtain that 
$$
F(u)=J(u,\nabla u)\leq \liminf_k J(u_k,\nabla u_k)=\liminf_k F(u_k).
$$
\end{proof}
\begin{proof}[Proof of Corollary \ref{tutosumato}]
Let $f\in\mathcal{C}(\Omega\times\er\times\er^n)$ be a function satisfying \eqref{PG} and \eqref{C}. Let $u_k$ be a minimizing sequence. By Theorem \ref{ODHAD} we have that $u_k$ is bounded in $W^{1,p}(\Omega)$. Alaoglu theorem implies that the closed ball in $W^{1,p}(\Omega)$ is compact, therefore we may assume that $u_k$ is weakly convergent to $u\in W^{1,p}(\Omega)$. Let us emphasise that by the Rellich-Kondrachev theorem \ref{thmRK} we also may assume that $u_k\to u$ in $L^q(\Omega)$. We get
$$
(u_k,\nabla u_k)\to (u,\nabla u) \quad \textup{in the topology}\quad \mathcal{L}^q\times \mathcal{L}^p_w.
$$
Using the Theorem \ref{MR1} we obtain that
$$
F(u)\leq \liminf_{k\to\infty}F(u_k)=\inf\{F(u)|u\in\mathcal{W}\}.
$$
Hence $u$ is the minimizer.
\end{proof}

\section{Appendix}\label{Sekce 4}
The following results are standard knowledge, the proofs can be found in \cite{LuMa}, \cite{DAC}, \cite{Leo} etc.
\begin{Theorem}[Scorza-Dragoni]\label{thmSD}
Let $\varepsilon>0$ and $f:\Omega\times \er\times \er^n\to \er$ be measurable in the first variable and continuous in the second and third one. Then there exists compact set $K\subset\Omega$ such that $|\Omega\setminus K|<\varepsilon$ and $f|_{K\times \er\times\er^n}$ is continuous.
\end{Theorem}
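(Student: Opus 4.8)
The plan is to assemble the result from two classical facts already available to us: Luzin's theorem \ref{thmLu}, applied to countably many fixed parameter values, and Egorov's theorem \ref{thmYEG}, applied to the local modulus of continuity in the parameters. Since continuity of $f$ on $K\times\er\times\er^n$ is a local property in the parameter variable $p=(u,\xi)$, it suffices to produce, for each radius $R\in\mathbb{N}$, a compact set on which $f$ is jointly continuous on $K\times\overline{B(0,R)}$, and then to intersect over $R$. (I may assume $|\Omega|<\infty$, as the paper's $\Omega$ is bounded; otherwise one exhausts $\Omega$ by sets of finite measure.) So fix $R$ and let $D=\{p_j\}_{j=1}^\infty$ be a countable dense subset of $\overline{B(0,R)}\subset\er\times\er^n$. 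For each $j$ the map $x\mapsto f(x,p_j)$ is measurable, so by Luzin's theorem \ref{thmLu} there is a compact $K_j\subset\Omega$, with $|\Omega\setminus K_j|$ small enough to be summable, on which $f(\cdot,p_j)$ is continuous; set $K^{(1)}=\bigcap_j K_j$, so that every $f(\cdot,p_j)$ is continuous on $K^{(1)}$.

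Next I would control the oscillation in the parameters \emph{uniformly in} $x$. For $\delta>0$ define the local modulus
$$
\omega(x,\delta):=\sup\big\{\,|f(x,p)-f(x,q)| : p,q\in D,\ |p-q|\le\delta\,\big\}.
$$
Because $f(x,\cdot)$ is continuous, hence uniformly continuous on the compact ball $\overline{B(0,R)}$, density of $D$ forces $\omega(x,1/k)\downarrow 0$ as $k\to\infty$ for every $x$. Moreover $x\mapsto\omega(x,1/k)$ is measurable, being a countable supremum of the measurable maps $x\mapsto|f(x,p)-f(x,q)|$. Egorov's theorem \ref{thmYEG} then yields a compact $K^{(2)}\subset\Omega$, again with small complement, on which $\omega(\cdot,1/k)\to0$ uniformly; equivalently, the family $\{f(x,\cdot)\}_{x\in K^{(2)}}$ is equicontinuous on $\overline{B(0,R)}$ with a single modulus.

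On $K_R:=K^{(1)}\cap K^{(2)}$ I would then deduce joint continuity on $K_R\times\overline{B(0,R)}$ by a three-term estimate: given $(x_0,p_0)$ and a nearby $(x,p)$, choose $p_j\in D$ close to both $p_0$ and $p$ and bound
$$
|f(x,p)-f(x_0,p_0)|\le |f(x,p)-f(x,p_j)|+|f(x,p_j)-f(x_0,p_j)|+|f(x_0,p_j)-f(x_0,p_0)|,
$$
where the outer two terms are small by the uniform modulus inherited from $K^{(2)}$ and the middle term is small by continuity of $f(\cdot,p_j)$ on $K^{(1)}$. Choosing the Luzin and Egorov tolerances at stage $R$ to total less than $\varepsilon 2^{-R}$ and setting $K:=\bigcap_R K_R$, we obtain a compact set with $|\Omega\setminus K|<\varepsilon$ on which $f$ is jointly continuous on $K\times\overline{B(0,R)}$ for every $R$, hence on all of $K\times\er\times\er^n$.

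The main obstacle I anticipate is precisely the second step. Luzin's theorem by itself gives continuity only along the fixed countable grid $D$, and there is a priori nothing to stop the parameter-modulus of $f(x,\cdot)$ from degenerating as $x$ ranges over $\Omega$, which would make the three-term estimate fail on any set of positive measure. The device that rescues the argument is to view the degeneration quantitatively through $\omega(\cdot,1/k)$ and to upgrade its pointwise convergence to zero into uniform convergence off a small set via Egorov. Consequently the delicate technical checks are the measurability of $\omega(\cdot,\delta)$ and the verification that the supremum over the countable dense set $D$ reproduces the genuine modulus of continuity, so that Egorov is legitimately applicable.
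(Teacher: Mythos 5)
The paper itself offers no proof of this statement: Theorem \ref{thmSD} sits in the appendix as standard knowledge, with the reader pointed to \cite{LuMa}, \cite{DAC}, \cite{Leo}. Your argument is correct, and it is essentially the classical Scorza--Dragoni proof found in those references: Luzin along a countable dense set $D$ of parameter values, the measurable moduli $\omega(\cdot,1/k)$ upgraded from pointwise to uniform convergence by Egorov, a three-term estimate yielding joint continuity on $K_R\times\overline{B(0,R)}$, and exhaustion of the parameter space by balls with geometrically summable tolerances. The two delicate checks you flag do go through: $\omega(x,\delta)$ is a countable supremum of measurable functions, hence measurable, and continuity of $f(x,\cdot)$ at fixed $x$ lets you pass from the supremum over pairs in $D$ to a bound on $|f(x,p)-f(x,p_j)|$ for arbitrary $p$ (approximate $p$ from within $D$ and take a limit, shrinking $\delta$ slightly). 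Two routine points deserve explicit mention in a written-out version: the sets produced by Egorov (Theorem \ref{thmYEG}) and by the paper's formulation of Luzin (Theorem \ref{thmLu}) are merely measurable, so inner regularity of Lebesgue measure must be invoked to replace them by compact subsets before intersecting; and the conclusion $|\Omega\setminus K|<\varepsilon$ with $K$ compact is simply false when $|\Omega|=\infty$, so your standing finite-measure assumption is a necessity rather than a convenience --- harmless here, since the paper only ever applies the theorem to bounded domains.
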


\begin{Theorem}[Friedrichs inequality]\label{thmFr}
Let $\Omega\subset \er^n$ be a domain with Lipchitz boundary, $p\in\left<1,\infty\right>$. There exists constant $C_\Omega$ such that
$$
\|u\|_p\leq C_{\Omega}\|\nabla u\|_p
$$
holds for all $u\in W^{1,p}_0(\Omega)$.
\end{Theorem}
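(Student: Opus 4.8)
The plan is to reduce the inequality to smooth, compactly supported functions and then to prove it there by a one-dimensional integration along a single coordinate axis. Since $W^{1,p}_0(\Omega)$ is by definition the closure of $\mathcal{C}^\infty_c(\Omega)$ in the $W^{1,p}$-norm, and since both sides of the asserted bound $\|u\|_p\le C_\Omega\|\nabla u\|_p$ depend continuously on $u$ in that norm, it is enough to establish the estimate for every $u\in\mathcal{C}^\infty_c(\Omega)$ with a constant independent of $u$; the general case then follows by taking an approximating sequence $u_k\to u$ in $W^{1,p}$ and passing to the limit. I would stress that the Lipschitz regularity of $\partial\Omega$ is not needed for this particular inequality: the only geometric input I would use is that $\Omega$, being bounded, is contained in a slab $\{x\in\er^n:a<x_1<b\}$ of finite width $L:=b-a$; in fact finite width in a single direction would already suffice.

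Fix $u\in\mathcal{C}^\infty_c(\Omega)$ and extend it by zero to a function in $\mathcal{C}^\infty_c(\er^n)$, still denoted $u$. Writing points as $x=(x_1,x')$ with $x'\in\er^{n-1}$ and using that $u$ vanishes for $x_1\le a$, the fundamental theorem of calculus gives
$$
u(x_1,x')=\int_a^{x_1}\partial_{x_1}u(t,x')\dee t .
$$
For $p>1$, Hölder's inequality applied in the variable $t$ over the interval $(a,b)$ yields the pointwise bound
$$
|u(x_1,x')|\le L^{1/p'}\Bigl(\int_a^b|\partial_{x_1}u(t,x')|^p\dee t\Bigr)^{1/p},
$$
with $p'$ the conjugate exponent; for $p=1$ one simply uses $|u(x_1,x')|\le\int_a^b|\partial_{x_1}u(t,x')|\dee t$.

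It then remains to raise this inequality to the $p$-th power and integrate, first in $x_1$ over $(a,b)$ and then in $x'$ over $\er^{n-1}$. As the right-hand side above does not depend on $x_1$, the integration in $x_1$ contributes one more factor of $L$, so that
$$
\int_{\er^n}|u|^p\dee x\le L^{p/p'}\cdot L\int_{\er^n}|\partial_{x_1}u|^p\dee x=L^p\int_{\er^n}|\partial_{x_1}u|^p\dee x,
$$
where I have used $p/p'+1=p$. Since $|\partial_{x_1}u|\le|\nabla u|$ pointwise and $u$ vanishes outside $\Omega$, this is precisely $\|u\|_p\le L\,\|\nabla u\|_p$, so one may take $C_\Omega=L$ (e.g. the diameter of $\Omega$). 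I do not anticipate a real obstacle here; the only step warranting a little care is the closing density argument, namely that $\|u_k\|_p\to\|u\|_p$ and $\|\nabla u_k\|_p\to\|\nabla u\|_p$ whenever $u_k\to u$ in $W^{1,p}$, which lets the inequality pass to the limit.
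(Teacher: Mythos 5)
Your argument is correct and is the canonical proof of the Poincar\'e--Friedrichs inequality; note that the paper itself gives no proof of Theorem \ref{thmFr}, but lists it in the appendix as standard knowledge with references to \cite{LuMa}, \cite{DAC}, \cite{Leo}, where essentially this same reduction to $\mathcal{C}^\infty_c(\Omega)$ followed by the fundamental theorem of calculus along one coordinate and H\"older in the slab appears. Your side remarks are also accurate: the Lipschitz regularity of $\partial\Omega$ is irrelevant for functions in $W^{1,p}_0(\Omega)$, and the real geometric hypothesis is finite width of $\Omega$ in some direction --- boundedness is implicit throughout the paper (and is genuinely needed, since the inequality fails on all of $\er^n$). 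The only loose end is the endpoint $p=\infty$, which the statement's interval $\left<1,\infty\right>$ nominally includes; there H\"older is replaced by the trivial bound $|u(x_1,x')|\leq L\,\|\partial_{x_1}u\|_\infty$, and in any case the paper only invokes the inequality for finite exponents in the proof of Theorem \ref{ODHAD}, so nothing is lost.
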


\begin{Theorem}[Jensen inequality]\label{thmJe}
Let $N\subset \er^n$ be a convex set. Let $\mu$ be a measure with a finite total variation on $N$. Let $\varphi:N\to \er$ be a convex function. Let $E\subset N$ be a measurable set.  Then the following inequality holds
$$
\varphi\left(\fint_E x\dee \mu(x)\right)\leq \fint_E\varphi(x)\dee \mu(x).
$$
\end{Theorem}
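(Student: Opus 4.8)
The plan is to deduce the inequality from the existence of a \emph{supporting hyperplane} of the convex function $\varphi$ at the barycentre of $E$. Since $\mu$ has finite total variation we may assume it is a finite positive measure (in our applications $\mu$ is just the Lebesgue measure), so that $\mu(E)>0$ and $\mu(E)^{-1}\mu|_E$ is a genuine probability measure on $E$. Write $x_0:=\fint_E x\dee\mu(x)$ for the barycentre. First I would record that $x_0\in N$: every closed half-space $\{x:\langle p,x\rangle\le\alpha\}$ that contains $E$ also contains its normalised $\mu$-average $x_0$, and since $N$ is convex it is the intersection of all such half-spaces, so $x_0$ cannot leave $N$. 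This is what makes $\varphi(x_0)$ meaningful.

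The decisive step is the supporting hyperplane. Because $\varphi$ is convex and finite on the convex set $N$, its epigraph $\{(x,t):x\in N,\ t\ge\varphi(x)\}$ is a convex subset of $\er^{n+1}$ and $(x_0,\varphi(x_0))$ is a boundary point of it. Separating this boundary point from the interior of the epigraph yields a non-vertical supporting hyperplane, i.e.\ a vector $p\in\er^n$ with
$$
\varphi(x)\ \ge\ \varphi(x_0)+\langle p,\,x-x_0\rangle\qquad(\forall x\in N).
$$
Equivalently, $p$ is a subgradient of $\varphi$ at $x_0$; at an interior point its existence is the elementary fact that a finite convex function has one-sided directional derivatives that are sublinear in the direction, and one reads off $p$ from them.

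Finally I would integrate this affine minorant. The right-hand side is $\mu$-integrable over $E$ (the constant part trivially, the linear part because $x\mapsto\langle p,x\rangle$ is integrable once the barycentre $\fint_E x\dee\mu$ exists), so integrating against $\mu(E)^{-1}\mu|_E$ gives
$$
\fint_E\varphi(x)\dee\mu(x)\ \ge\ \varphi(x_0)+\Big\langle p,\ \fint_E x\dee\mu(x)-x_0\Big\rangle\ =\ \varphi(x_0),
$$
where the bracketed term vanishes by the very definition of $x_0$. This is the asserted inequality, and the argument even shows $\int_E\varphi\dee\mu>-\infty$ automatically.

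I expect the \emph{only} real obstacle to be the multidimensional supporting-hyperplane / subgradient existence, which is precisely the place where convexity of $\varphi$ is used; everything else is bookkeeping. In the one-dimensional case it reduces to drawing a tangent line from below, while for general $n$ it is the separation theorem for convex sets in $\er^{n+1}$. One minor technical point worth flagging is to ensure $x_0$ lies in the relative interior of $N$ so that the supporting hyperplane is non-vertical (hence a legitimate subgradient rather than a degenerate one); in the applications the barycentre is automatically interior, so this causes no difficulty.
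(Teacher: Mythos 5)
The paper does not prove this statement at all: Theorem \ref{thmJe} sits in the appendix among results quoted as ``standard knowledge'' with a blanket citation to \cite{LuMa}, \cite{DAC}, \cite{Leo}, and your supporting-hyperplane (subgradient) argument is exactly the canonical proof those references give, so your proposal is correct and matches the intended approach. The only technicalities are the ones you already flag --- positivity of $\mu$, $\mu(E)>0$, and the barycentre lying in the relative interior of $N$ so that the supporting hyperplane is non-vertical --- and in the paper's sole application (Lebesgue measure on the cells $Q^j_{m,t}$ with $\varphi=f(x^j_{m,t},u_j(x^j_{m,t}),\cdot)$ convex and finite on all of $\er^{z}$, so $N=\er^{z}$) none of these can cause trouble.
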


\begin{Theorem}[Riesz]\label{thmRi}
Let $(\Omega,\mu)$ be a space with measure and let $u_k$ be a sequence of measurable functions such that 
$$
u_k\stackrel{\mu}{\to} u\quad u_k\textup{ converge in measure to } u.
$$
then there exists an increasing sequence of indices $l_k$ such that 
$$
u_{l_k}(x)\to u(x)\quad \textup{for } \mu-\textup{ a.e. }x\in\Omega.
$$
\end{Theorem}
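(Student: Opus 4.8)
The plan is to extract a rapidly convergent subsequence and then argue by countable subadditivity (the Borel--Cantelli mechanism). First I would unwind the hypothesis of convergence in measure, which means that for every fixed $\varepsilon>0$ one has $\mu(\{x\in\Omega:|u_k(x)-u(x)|>\varepsilon\})\to 0$ as $k\to\infty$. Applying this with $\varepsilon=1/j$ for each $j\in\mathbb{N}$, I would select inductively a strictly increasing sequence of indices $l_j$ such that
$$
\mu\bigl(\{x\in\Omega:|u_{l_j}(x)-u(x)|>1/j\}\bigr)<2^{-j}.
$$
At each step this is possible because convergence in measure supplies an index beyond which the relevant measure drops below $2^{-j}$; one then simply chooses $l_j$ larger than $l_{j-1}$ as well.

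Writing $A_j:=\{x\in\Omega:|u_{l_j}(x)-u(x)|>1/j\}$, I would consider the set of points lying in infinitely many $A_j$, namely $A:=\limsup_j A_j=\bigcap_{m}\bigcup_{j\ge m}A_j$. Since each $\mu(A_j)<2^{-j}$ is finite and $\sum_j\mu(A_j)<\infty$, countable subadditivity gives $\mu\bigl(\bigcup_{j\ge m}A_j\bigr)\le\sum_{j\ge m}\mu(A_j)$ for every $m$, and the right-hand side is the tail of a convergent series, hence tends to $0$ as $m\to\infty$. Since $A\subset\bigcup_{j\ge m}A_j$ for every $m$, this forces $\mu(A)=0$. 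Notably, no finiteness of $\mu$ is required: the argument rests only on the summability of the numbers $\mu(A_j)$, which makes it fit the elementary spirit of the paper without recourse to Borel--Cantelli as a black box.

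It remains to verify pointwise convergence off the null set $A$. If $x\notin A$, then $x$ belongs to only finitely many of the sets $A_j$, so there is an index $m$ with $|u_{l_j}(x)-u(x)|\le 1/j$ for all $j\ge m$; letting $j\to\infty$ yields $u_{l_j}(x)\to u(x)$. Thus $u_{l_j}(x)\to u(x)$ for $\mu$-almost every $x\in\Omega$, which is the assertion.

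The step I expect to require the most care is the inductive choice of the subsequence: the tolerance $2^{-j}$ must be paired with the threshold $1/j$ in a way that keeps the measures summable, since this summability is precisely what drives the conclusion $\mu(A)=0$. The remaining verifications are routine.
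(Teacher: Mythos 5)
Your proof is correct and complete: the inductive extraction of $l_j$ with $\mu(A_j)<2^{-j}$, the tail-subadditivity argument giving $\mu(\limsup_j A_j)=0$, and the pointwise verification off that null set constitute exactly the standard Borel--Cantelli argument for this theorem. The paper itself gives no proof of this appendix result, deferring to the cited references, and your argument is the canonical one found there, so there is nothing to reconcile.
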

\begin{Theorem}[Rellich-Kondrachev]\label{thmRK}
Let $p\in(1,\infty)$ and let $1\leq q<p^*$ where
$$
p^*:=\begin{cases}
\frac{np}{n-p}\quad &\textup{for }p<n,\\
\infty &\textup{otherwise.}
\end{cases}
$$
Let $\Omega\subset \er^n$ be a bounded domain with Lipchitz boundary. And let $u_k\in W^{1,p}(\Omega)$ be a bounded sequence in $W^{1,p}(\Omega)$, then there exists an increasing sequence of indices $l_k$ and $u\in L^q(\Omega)$ such that 
$$
\|u-u_{l_k}\|_{L^q(\Omega)}\to 0.
$$
\end{Theorem}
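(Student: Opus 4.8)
The plan is to prove this compactness statement in three stages: reduce to compactly supported functions on all of $\er^n$, extract a subsequence converging strongly in $L^p$ by a mollification plus Arzelà–Ascoli argument, and then upgrade the convergence from $L^1$ to $L^q$ for every $q<p^*$ by interpolation against a uniform Sobolev bound. First I would invoke the Sobolev extension theorem for Lipschitz domains to obtain a bounded linear operator $E\colon W^{1,p}(\Omega)\to W^{1,p}(\er^n)$ whose images are all supported in one fixed bounded open set $\Omega'\supset\overline{\Omega}$. Writing $\tilde u_k:=Eu_k$, the sequence $(\tilde u_k)$ is bounded in $W^{1,p}(\er^n)$, say $\|\tilde u_k\|_{W^{1,p}}\le M$, with $\supp\tilde u_k\subset\Omega'$ for all $k$; any strong limit found in $L^q(\Omega')$ then restricts to the desired limit in $L^q(\Omega)$.

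The heart of the argument is strong compactness in $L^p(\er^n)$. Let $\rho_\varepsilon$ be a standard mollifier supported in $B(0,\varepsilon)$. The key quantitative estimate is
$$
\|\tilde u_k*\rho_\varepsilon-\tilde u_k\|_{L^p(\er^n)}\le\varepsilon\,\|\nabla\tilde u_k\|_{L^p(\er^n)}\le\varepsilon M,
$$
which follows from the translation bound $\|\tilde u_k(\cdot-y)-\tilde u_k\|_{L^p}\le|y|\,\|\nabla\tilde u_k\|_{L^p}$ (proved for smooth functions by writing $\tilde u_k(x-y)-\tilde u_k(x)=-\int_0^1 y\cdot\nabla\tilde u_k(x-ty)\dee t$ and using Minkowski's integral inequality, then passing to the general case by density) together with $\int\rho_\varepsilon(y)|y|\dee y\le\varepsilon$. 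For each fixed $\varepsilon$ the mollified family $(\tilde u_k*\rho_\varepsilon)_k$ is uniformly bounded and uniformly Lipschitz on a fixed compact set, since $\|\tilde u_k*\rho_\varepsilon\|_\infty\le M\|\rho_\varepsilon\|_{p'}$ and $\|\nabla(\tilde u_k*\rho_\varepsilon)\|_\infty=\|\tilde u_k*\nabla\rho_\varepsilon\|_\infty\le M\|\nabla\rho_\varepsilon\|_{p'}$ by Hölder's inequality. Hence Arzelà–Ascoli yields a uniformly convergent subsequence; running this over $\varepsilon=1/m$ and extracting a diagonal subsequence $(\tilde u_{l_k})$, the displayed estimate closes a Cauchy estimate and shows that $(\tilde u_{l_k})$ converges to some $u$ strongly in $L^p(\er^n)$, and in particular in $L^1$ since $\Omega'$ is bounded.

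Finally I would upgrade to $L^q$ for $1\le q<p^*$. The Gagliardo–Nirenberg–Sobolev embedding gives the uniform bound $\|\tilde u_k\|_{L^{p^*}}\le C\|\tilde u_k\|_{W^{1,p}}\le CM$ (when $p\ge n$ one instead bounds every $L^r$ norm uniformly, which covers the case $p^*=\infty$). For $1\le q<p^*$ pick $\theta\in(0,1]$ with $\frac1q=\theta+\frac{1-\theta}{p^*}$ and interpolate,
$$
\|\tilde u_{l_k}-\tilde u_{l_m}\|_{L^q}\le\|\tilde u_{l_k}-\tilde u_{l_m}\|_{L^1}^{\theta}\,\|\tilde u_{l_k}-\tilde u_{l_m}\|_{L^{p^*}}^{1-\theta},
$$
so the uniform $L^{p^*}$ bound combined with the $L^1$-Cauchy property forces $(\tilde u_{l_k})$ to be Cauchy in $L^q$ as well, with the same limit $u$; the strictness $q<p^*$ is exactly what guarantees $\theta>0$.

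The main obstacle is really packaging the two external ingredients — the Sobolev extension operator for Lipschitz domains and the Gagliardo–Nirenberg–Sobolev inequality — whose from-scratch proofs would dwarf the rest; consistent with the appendix conventions I would simply cite them. Among the genuinely self-contained steps, the most delicate point is the diagonal extraction: one must check that after fixing $\varepsilon$ the single mollification estimate bounds $\|\tilde u_{l_k}-\tilde u_{l_m}\|_{L^p}$ uniformly in $k,m$, so that the uniform (hence $L^p$) convergence of the mollifications at scale $\varepsilon$ actually closes the three-term Cauchy estimate as $\varepsilon\to0$.
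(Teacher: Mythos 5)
Your proof is correct: the extension to a fixed bounded set, the quantitative mollification bound $\|\tilde u_k*\rho_\varepsilon-\tilde u_k\|_{L^p}\le\varepsilon\|\nabla\tilde u_k\|_{L^p}$, the Arzel\`a--Ascoli plus diagonal extraction to get an $L^p$-Cauchy subsequence, and the interpolation against the uniform $L^{p^*}$ bound (with the separate treatment of $p\ge n$) together form the classical Fr\'echet--Kolmogorov-style argument, and each step closes as you describe. Note, however, that the paper offers no proof of this statement at all: Theorem \ref{thmRK} sits in the appendix among results declared to be ``standard knowledge'' with proofs deferred to \cite{LuMa}, \cite{DAC}, \cite{Leo}, so there is no in-paper argument to compare against; your write-up is essentially the proof one finds in those references, and your decision to cite the extension operator and the Gagliardo--Nirenberg--Sobolev inequality rather than reprove them is consistent with the paper's conventions.
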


\begin{Theorem}[Yegorov]\label{thmYEG}
Let $\Omega\subset \er^n$ measurable, let $u_k$ be a sequence of measurable functions such that 
$$
u_k(x)\to u(x)\quad \textup{for }\mu-\textup{a.e. } x\in\Omega.
$$
Then there exist the sequence of sets $N_j$ and the strictly increasing sequence of indices $k_j$ such that
\begin{enumerate}[\upshape(Y1)]
    \item 
$
N_j\subset N_{j+1}, 
$
\item 
$
\mu(\Omega\setminus N_j)\leq \frac{1}{j},
$
\item
$
|u-u_{k_j}|<\frac{1}{j}\quad \textup{on } N_j.
$
\end{enumerate}

\end{Theorem}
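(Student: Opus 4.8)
The plan is to adapt the classical pointwise-to-uniform argument behind this theorem, adding exactly the bookkeeping needed to produce a \emph{nested} family of sets together with a single diagonal sequence of indices. Throughout I assume $\mu(\Omega)<\infty$, which holds in every application in this paper since $\Omega$ is bounded; finiteness of the measure is precisely what makes continuity of $\mu$ from above available, and the statement fails on infinite measure spaces.

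First I would isolate the tail sets. For integers $m\geq 1$ and $K\geq 1$ set
$$
E_m^{K}:=\bigcup_{i\geq K}\left\{x\in\Omega:\ |u_i(x)-u(x)|\geq \tfrac1m\right\}.
$$
For fixed $m$ these decrease as $K$ grows, and their intersection over $K$ is contained in the null set where $u_k$ fails to converge to $u$. Hence, by continuity of $\mu$ from above (here finiteness of $\mu(\Omega)$ is essential), $\mu(E_m^{K})\to 0$ as $K\to\infty$.

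The key step is a simultaneous choice of indices that secures nesting. For each $j$ I would pick $K(j,m)$ so large that $\mu\big(E_m^{K(j,m)}\big)<\tfrac{1}{j\,2^{m+1}}$, and --- this is the one extra demand beyond the textbook argument --- so that $K(j,m)$ is nondecreasing in $j$ for every fixed $m$. Define
$$
N_j:=\Omega\setminus\bigcup_{m=1}^{\infty}E_m^{K(j,m)}.
$$
Monotonicity of $K(\cdot,m)$ forces $E_m^{K(j+1,m)}\subset E_m^{K(j,m)}$, so the excluded set shrinks with $j$ and (Y1) $N_j\subset N_{j+1}$ holds; summing the geometric bound gives $\mu(\Omega\setminus N_j)\leq \sum_m \tfrac{1}{j\,2^{m+1}}=\tfrac1{2j}\leq\tfrac1j$, which is (Y2). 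By construction, on $N_j$ one has $|u_i-u|<\tfrac1m$ for all $i\geq K(j,m)$ and all $m$, i.e. $u_k\to u$ uniformly on $N_j$.

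Finally I would extract the diagonal indices. Uniform convergence on $N_j$ gives a threshold beyond which $\sup_{N_j}|u_k-u|<\tfrac1j$; choosing $k_j$ past that threshold and also strictly larger than $k_{j-1}$ yields a strictly increasing sequence with $|u-u_{k_j}|<\tfrac1j$ on $N_j$, which is (Y3). The main obstacle is not any single estimate but the coupling of (Y1) and (Y2): one must obtain nesting and the measure bound at the same time, and the device that makes this painless is to require the tail indices $K(j,m)$ to grow with $j$, so that refining the level $j$ only discards more of $\Omega$ and never restores points already removed.
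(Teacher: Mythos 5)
Your proof is correct; the paper itself does not prove this statement (it is an appendix item referred to the standard references), and your argument is exactly the classical Egorov construction with the extra monotonicity requirement on the tail indices $K(j,m)$ correctly supplying the nesting (Y1) alongside (Y2) and (Y3). You are also right to flag that the finiteness of $\mu(\Omega)$ is an unstated but necessary hypothesis, satisfied in every application in the paper since $\Omega$ is bounded.
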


\begin{Theorem}[Luzin] \label{thmLu}
Let $\mu$ be a complete Radon measure on locally compact space and measurable function $u$ finite $\mu$-almost everywhere. Then for open $K$ and $\varepsilon>0$ there exist open $G\subset K$ such that $u$ is continuous on $K\setminus G$ and $\mu(G)<\varepsilon.$ 
\end{Theorem}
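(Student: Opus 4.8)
The plan is to reduce the general measurable function to a bounded one, approximate the latter \emph{uniformly} by simple functions, and then use inner regularity of the Radon measure to make each approximating simple function genuinely continuous after deleting a small open set; a uniform limit of continuous functions is continuous, which closes the argument. First I would compose $u$ with a homeomorphism $h\colon[-\infty,\infty]\to[-1,1]$ (say $h(t)=t/(1+|t|)$ with $h(\pm\infty)=\pm1$) and set $w:=h\circ u$, a bounded measurable function on $K$ with values in $[-1,1]$. Since $u$ is finite $\mu$-almost everywhere, the measurable set $A:=\{x\in K:\ |u(x)|=\infty\}=\{w=\pm1\}$ is $\mu$-null, so by outer regularity of $\mu$ there is an open set with $A\subset O\subset K$ and $\mu(O)<\varepsilon/2$; this $O$ will be absorbed into $G$. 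On $K\setminus O$ the value of $w$ lies in $(-1,1)$, where $h^{-1}$ is continuous, so it suffices to produce $w$ continuous off a small open set.

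Next, because $w$ is bounded and measurable, partitioning its range $[-1,1]$ into finitely many intervals of length below $1/n$ yields simple functions $s_n=\sum_{j}c^n_j\,\chi_{E^n_j}$, with the $E^n_j$ measurable, pairwise disjoint and $\bigcup_j E^n_j=K$, satisfying $\sup_K|s_n-w|<1/n$; hence $s_n\to w$ uniformly on $K$. For each fixed $n$, inner regularity of the Radon measure lets me choose compact sets $C^n_j\subset E^n_j$ with $\sum_j\mu(E^n_j\setminus C^n_j)<\varepsilon\,2^{-n-2}$. The $C^n_j$ are pairwise disjoint compacta, hence (in a Hausdorff space) separated by disjoint open sets, so each $C^n_j$ is relatively open in $\bigcup_j C^n_j$; as $s_n$ is constant on each $C^n_j$, it is locally constant and therefore continuous on $\bigcup_j C^n_j$. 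Setting $G_n:=K\setminus\bigcup_j C^n_j$, which is open since $K$ is open and $\bigcup_j C^n_j$ is compact hence closed, gives $\mu(G_n)<\varepsilon\,2^{-n-2}$ with $s_n$ continuous on $K\setminus G_n$.

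Finally I would put $G:=O\cup\bigcup_{n}G_n$, which is open with $\mu(G)\le\varepsilon/2+\sum_{n}\varepsilon\,2^{-n-2}=\varepsilon/2+\varepsilon/4<\varepsilon$. On $K\setminus G$ every $s_n$ is continuous and $s_n\to w$ uniformly, so $w$ is continuous on $K\setminus G$; since $A\subset O\subset G$ forces $w\in(-1,1)$ there, the composition $u=h^{-1}\circ w$ is continuous on $K\setminus G$ as well, which is the desired conclusion.

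The step I expect to be the main obstacle is establishing the \emph{genuine continuity} (not merely measurability) of the simple function on the retained set: this rests on pairing inner regularity, which supplies compact cores of the level sets carrying almost all of the measure, with the topological fact that finitely many disjoint compact sets are simultaneously separated by disjoint open sets. A secondary technical point is that extracting compacta with small complementary measure presupposes $\mu(K)<\infty$; this already covers the use in Lemma~\ref{LSCP} (Lebesgue measure on a bounded domain), and the general locally compact case is recovered by first exhausting $K$ by relatively compact open subsets and patching the resulting open exceptional sets.
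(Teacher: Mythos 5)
The paper does not actually prove Theorem \ref{thmLu}: it appears in the Appendix among results ``recalled for convenience,'' with the proof delegated to the cited references, so there is nothing internal to compare your argument against. On its own terms your proof is correct and is the standard textbook argument: uniform approximation of the (truncated) function by simple functions, inner regularity to replace each level set by a compact core, the observation that a simple function is locally constant hence continuous on a finite disjoint union of compacta, and finally the uniform-limit theorem on the retained set. All the individual steps check out, including the identity $\mu(G_n)=\sum_j\mu(E^n_j\setminus C^n_j)$ and the passage back from $w=h\circ u$ to $u$ off the null set where $u$ is infinite. The one caveat is the one you already flag yourself: inner regularity by compacta with small complementary measure requires $\mu(K)<\infty$ (or at least $\sigma$-finiteness plus an exhaustion argument), a hypothesis the paper's statement omits but which is satisfied in its only use, namely Lemma \ref{LSCP} with Lebesgue measure on a bounded domain. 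It would be worth stating that finiteness assumption explicitly rather than leaving it as a closing remark.
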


\begin{Theorem}[Chebysev inequality] \label{thmChe}
Let $\Omega\subset \er^n$ measurable, $u\in L^0(\Omega)$, $p\in\left<1,\infty\right)$ and let $t\in(0,\infty)$ then one has
$$
t^p\left|\left\{|u|>t\right\}\right|\leq \int_{\Omega}|u(x)|^p\dee x.
$$
\end{Theorem}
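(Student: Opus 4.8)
The plan is to use the most elementary tool available, namely the monotonicity of the integral combined with the monotonicity of the power map $s\mapsto s^p$ on $[0,\infty)$; no approximation or limiting procedure is required. First I would introduce the superlevel set
$$
A:=\{x\in\Omega:|u(x)|>t\},
$$
which is measurable since $u\in L^0(\Omega)$ and hence $x\mapsto|u(x)|$ is measurable. If $\int_\Omega|u|^p\,\dee x=+\infty$ the claimed inequality holds trivially, so I may assume the right-hand side is finite.

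Next I would restrict the domain of integration to $A$. Because the integrand $|u|^p$ is non-negative, monotonicity of the integral gives
$$
\int_\Omega|u(x)|^p\,\dee x\geq\int_A|u(x)|^p\,\dee x.
$$
On $A$ one has $|u(x)|>t>0$, and since $p\in[1,\infty)$ the function $s\mapsto s^p$ is non-decreasing on $[0,\infty)$, so $|u(x)|^p\geq t^p$ for every $x\in A$. Integrating this pointwise bound over $A$ yields
$$
\int_A|u(x)|^p\,\dee x\geq\int_A t^p\,\dee x=t^p|A|=t^p\left|\{|u|>t\}\right|.
$$
Chaining the two displayed inequalities closes the argument.

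I do not anticipate any genuine obstacle: the statement follows directly from the order-preserving properties of the Lebesgue integral. The only points deserving a brief remark are the measurability of $A$, which is immediate from $u\in L^0(\Omega)$, and the degenerate case in which the right-hand integral diverges, where there is nothing to prove. Consistently with the expository aim of the paper, this keeps the proof within purely elementary measure theory.
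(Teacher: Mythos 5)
Your proof is correct: the two-step estimate $\int_\Omega|u|^p\,\dee x\geq\int_A|u|^p\,\dee x\geq t^p|A|$ over the superlevel set $A=\{|u|>t\}$ is exactly the standard argument, and your remarks on measurability and the trivial infinite case are sound (though the latter is not even needed, since the chain of inequalities holds in $[0,\infty]$). Note that the paper itself gives no proof of this statement --- it is listed in the Appendix as standard knowledge with references to \cite{LuMa}, \cite{DAC}, \cite{Leo} --- so there is nothing to diverge from; your argument matches the textbook proof those references contain and fits the paper's elementary style.
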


\section*{Acknowledgments}

Both named authors were supported by grant number GJ20-19018Y of the Grant Agency of the Czech Republic.

\bibliographystyle{plain}

\end{document}